\newtheorem{theorem}{Theorem}
\newtheorem{definition}{Definition}
\newtheorem{lemma}{Lemma}
\newtheorem{corollary}{Corollary}
\newtheorem{remark}{Remark}
\newtheorem{assumption}{Assumption}
\newtheorem{standassumption}{Standing Assumption}
\newtheorem{conjecture}{Conjecture}
\newcommand{\RR}{{\mathbb{R}}}
\newcommand{\NN}{{\mathbb{N}}}
\newcommand{\EE}{{\mathbb{E}}}
\newcommand{\PP}{{\mathbb{P}}}
\newcommand{\JJ}{{\mathbb{J}}}
\newcommand{\FF}{{\mathbb{F}}}
\newcommand{\VI}{{\mathrm{VI}}}
\newcommand{\mc}{\mathcal}
\newcommand{\norm}[1]{\left\|#1\right\|}
\newcommand{\normsq}[1]{\left\|#1\right\|^2}
\newcommand{\EEk}[1]{\EE\left[#1|\mc F_k\right]}
\newcommand{\EEx}[1]{\EE\left[#1\right]}
\newcommand{\op}{\operatorname}
\newcommand{\prox}{\operatorname{prox}}
\newcommand{\bs}{\boldsymbol}
\newcommand{\fineass}{\hfill\small$\square$}
\newcommand{\normsqphi}[1]{\left\|#1\right\|_\Phi^2}
\title{\LARGE \bf Distributed Forward--Backward algorithms for stochastic generalized Nash equilibrium seeking}
\author{Barbara Franci and Sergio Grammatico
\thanks{The authors are in the Delft Center for System and Control, TU Delft, The Netherlands. E-mail addresses:
        {\tt\small \{b.franci-1, s.grammatico\}@tudelft.nl}.}
\thanks{This work was partially supported by NWO under research projects OMEGA (613.001.702) and P2P-TALES (647.003.003), and by the ERC under research project COSMOS (802348).}}
\begin{document}

\maketitle

\begin{abstract}
We consider the stochastic generalized Nash equilibrium problem (SGNEP) with expected-value cost functions. 
Inspired by Yi and Pavel (Automatica, 2019), we propose a distributed GNE seeking algorithm based on the preconditioned forward--backward operator splitting for SGNEP, where, at each iteration, the expected value of the pseudogradient is approximated via a number of random samples.

As main contribution, we show almost sure convergence of our proposed algorithm if the pseudogradient mapping is restricted (monotone and) cocoercive. For non--generalized SNEPs, we show almost sure convergence also if the pseudogradient mapping is restricted strictly monotone.

Numerical simulations show that the proposed forward--backward algorithm seems faster that other available algorithms.
\end{abstract}

\paragraph{Keywords.}
Stochastic generalized Nash equilibrium problems, stochastic variational inequalities.


\section{Introduction}

Nash equilibrium problems (NEPs) have been widely studied since their first formulation \cite{nash1950}. In a NEP, a set of agents interacts with the aim of minimizing their cost functions. A number of results are present concerning existence ad uniqueness of an equilibrium as well as methodologies to compute one \cite{li2013,sandholm2010}. 
The interactions between the agents are expressed, in this case, only through the cost function. For the sake of generality and of making the problem realistic, coupling constraints have been considered as well, since the introduction of so-called generalized Nash equilibrium problems (GNEPs) \cite{debreu1952}. This class of games has been recently studied by the multi-agent system and control community \cite{yi2019,kulkarni2012,grammatico2017,pavel2019,salehisadaghiani2019,kanzow2019}. One main reason for this interest is related to possible applications that range from economics, to engineering and operation research \cite{kulkarni2012,pavel2007}.

The characteristic of GNEPs is that each agent seeks to minimize his own cost function under some \textit{joint} feasibility constraints. Namely, both the cost function and the constraints depend on the strategies chosen by the other agents. Consequently, the search for GNE is usually very difficult. 

Similarly to NEPs, a number of results are present concerning algorithms and methodologies to find an equilibrium in a GNEP \cite{facchinei2010,facchinei2007}. 
In the deterministic case, many algorithm are available to find a solution, both distributed or semi-decentralized \cite{yi2019,belgioioso2018,belgioioso2017}. 
Among the possible methods to reach an equilibrium, an elegant approach is to seek for a solution of the associated variational inequality (VI) \cite{facchinei2007}. 

To recast the problem as a VI, the Karush--Kuhn--Tucker (KKT) conditions are considered and the problem is rewritten as a monotone inclusion. Such a problem can then be solved via operator splitting techniques.
Among others, we focus on the forward--backward (FB) splitting which leads to one of the fastest and definitely simplest algorithm available \cite{bau2011}. 

The downside of the FB scheme is that, when applied to GNEPs, it is not distributed. 
When considering game-theoretic setup, it is desirable (and more realistic) to consider distributed algorithms, in the sense that each agent should only know its local cost function and its local constraints. For this reason, \textit{preconditioning} has been recently introduced in \cite{yi2019}.
In \cite{franci19}, we propose a preliminary extension of this method to the stochastic case.

A stochastic NEP (SNEP) is a NEP where the cost functions are expected value functions. Such problems arise when there is some uncertainty, expressed through a random variable with an unknown distribution. Unfortunately, SNEPs are not studied as much as their deterministic counterpart, despite a number of practical problems must be modelled with uncertainty. Among others, in transportation systems, a possible source of uncertainty is the drivers perception of travel time \cite{watling2006}; in electricity markets, companies produce energy without knowing in advance exactly the demand \cite{henrion2007}. Similarly to the deterministic case, if we also consider shared constraints, the problem can be modelled with stochastic GNEPs (SGNEPs) \cite{xu2013}. For instance, any networked Cournot games with market capacity constraints, with uncertainty in the demand, can be modelled in this case \cite{demiguel2009,abada2013}. Due to their wide applicability, SGNEPs have received quite some attention from the control community as well \cite{ravat2011,koshal2013,yu2017}.


A SGNEP is a GNEP with expected value cost functions. Indeed, if the random variable is known, the expected value formulation can be solved with any standard technique for deterministic variational inequalities. However, the pseudogradient is usually not directly accessible, for instance due to the need of excessive computations in performing the expected value. For this reason, in many situations, the search for a solution of a SVI relies on samples of the random variable.
Essentially, there are two main methodologies available: sample average approximation (SAA) and stochastic approximation (SA). In the SAA approach, we replace the expected value formulation with the average over an increasing number of samples of the random variable. 
This approach is practical when there is a huge number of data available as, for instance, in Monte Carlo simulations or machine learning \cite{staudigl2019, iusem2017}. 
In the SA approach, each agent can sample only one realization of the random variable. This approach is less computationally expensive, but, not surprisingly, it usually requires stronger assumptions on the mappings involved \cite{koshal2013,yousefian2017,yousefian2014}.

One of the very first formulations of a SA approach for a stochastic FB problem was made in \cite{jiang2008}, under the assumption of strong monotonicity and Lipschitz continuity of the mapping involved. 
In \cite{rosasco2016} instead, convergence is proved under cocoercivity \textit{and} uniform monotonicity. 
To weaken the assumptions, algorithms more involved than the FB have been proposed and studied in the literature. For instance, in a recent paper, \cite{staudigl2019}, the authors propose a forward-backward-forward (FBF) algorithm that converges to a solution under the assumption of pseudomonotone pseudo-gradient mapping but it requires two costly evaluations of the pseudogradient mapping. Alternatively, under the same assumptions, one can consider the extragradient (EG) method proposed in \cite{iusem2017} which takes two projection steps that can be slow. Therefore, taking weaker assumptions comes at the price of having computational complexity and slowness of the algorithms.


In this paper, we present a FB algorithm and prove its convergence both for SGNEPs and SNEPs. In particular, our main contributions are the following:
\begin{itemize}
\item We present the first preconditioned FB algorithm for SGNEPs with non smooth cost functions and prove its almost sure convergence to an equilibrium.
\item Almost sure convergence is shown under restricted cocoercivity of the pseudo-gradient mapping by using the SAA scheme (Sections \ref{sec_SGNEPs} and \ref{sec_conv}). 
\item For SNEPs, the FB algorithm converge almost surely under restricted cocoercivity or restricted strict monotonicity also with the SA scheme (Section \ref{sec_SNEPs}).
\end{itemize}
 
Our assumptions are weaker when compared to the current literature on FB algorithms. Indeed, the preconditioning technique is presented in \cite{yi2019,belgioioso2018} under strong monotonicity while FB algorithms for stochastic VIs converge almost surely for cocoercive and uniformly monotone \cite{rosasco2016} or strongly monotone operators \cite{jiang2008, robbins1951}. Moreover, compared to the naively monotone FBF and EG, our algorithm shows faster convergence both in number of iterations and computational time.

We point out that, in both cases, we suppose the pseudogradient to be Lipschitz continuous, but knowing the Lipschitz constant is not necessary. This is remarkable since computing the Lipschitz constant can be challenging in a distributed setup. 

A preliminary study related to this work is submitted for publication \cite{franci19}. In that paper, we considered a SGNEP and build a preconditioned FB algorithm with damping. The algorithm is guaranteed to reach a SGNE if the pseudogradient mapping is strongly monotone and its convergence follows directly from \cite{rosasco2016}. We here motivate in details that the assumption of uniform monotonicity taken in \cite{rosasco2016} can be replaced with assumptions related to specific properties of the selected approximation schemes. Moreover, restricted cocoercivity is enough for the analysis and to ensure convergence. 


\section{Notation and preliminaries}

We use Standing Assumptions to postulate technical conditions that implicitly hold throughout the paper while Assumptions are postulated only when explicitly called.

$\RR$ denotes the set of real numbers and $\bar\RR=\RR\cup\{+\infty\}$.
$\langle\cdot,\cdot\rangle:\RR^n\times\RR^n\to\RR$ denotes the standard inner product and $\norm{\cdot}$ represents the associated Euclidean norm. We indicate a (symmetric and) positive definite matrix $A$, i.e., $x^\top Ax>0$, with $A\succ0$. Given a matrix $\Phi\succ0$, we denote the $\Phi$-induced inner product, $\langle x, y\rangle_{\Phi}=\langle \Phi x, y\rangle$. The associated $\Phi$-induced norm, $\norm{\cdot}_{\Phi}$, is defined as $\norm{x}_{\Phi}=\sqrt{\langle \Phi x, x\rangle}$. 
$A\otimes B$ indicates the Kronecker product between matrices $A$ and $B$. ${\bf{0}}_m$ indicates the vector with $m$ entries all equal to $0$. Given $N$ vectors $x_{1}, \ldots, x_{N} \in \RR^{n}$, $\boldsymbol{x} :=\op{col}\left(x_{1}, \dots, x_{N}\right)=\left[x_{1}^{\top}, \dots, x_{N}^{\top}\right]^{\top}.$

$\op{J}_F=(\op{Id}+F)^{-1}$ is the resolvent of the operator $F:\RR^n\to\RR^n$ where $\op{Id}$ indicates the identity operator. The set of fixed points of $F$ is $\op{fix}(F):=\{x\in\RR^n\mid x\in F(x)\}$. 
For a closed set $C \subseteq \RR^{n},$ the mapping $\op{proj}_{C} : \RR^{n} \to C$ denotes the projection onto $C$, i.e., $\op{proj}_{C}(x)=\op{argmin}_{y \in C}\|y-x\|$. The residual mapping is, in general, defined as $\op{res}(x^k)=\norm{x^k-\op{proj}_{C}(x^k-F(x^k))}.$ Given a proper, lower semi-continuous, convex function $g$, the subdifferential is the operator $\partial g(x):=\{u\in\Omega \mid (\forall y\in\Omega):\langle y-x,u\rangle+g(x)\leq g(y)\}$. The proximal operator is defined as $\prox_{g}(v):=\op{argmin}_{u\in\Omega}\{g(u)+\frac{1}{2}\norm{u-v}^{2}_{}\}=\mathrm{J}_{\partial g}(v)$.
$\iota_C$ is the indicator function of the set C, i.e., $\iota_C(x)=1$ if $x\in C$ and $\iota_C(x)=0$ otherwise. The set-valued mapping $\mathrm{N}_{C} : \RR^{n} \to \RR^{n}$ denotes the normal cone operator for the the set $C$ , i.e., $\mathrm{N}_{C}(x)=\varnothing$ if $x \notin C,\left\{v \in \RR^{n} | \sup _{z \in C} v^{\top}(z-x) \leq 0\right\}$ otherwise.

We now recall some basic properties of operators \cite{facchinei2007}. First, we recall that $F$ is $\ell$-Lipschitz continuous if, for some $\ell>0$
$$
\norm{F(x)-F(y)} \leq \ell\norm{x-y} \text { for all } x, y \in \RR^{n}.
$$
\begin{definition}[Monotone operators]
A mapping $F:\op{dom}F\subseteq\RR^n\to\RR^n$ is:
\begin{itemize}
\item (strictly) monotone if for all $x, y \in \op{dom}(F)$ $(x\neq y)$
$$
\langle F(x)-F(y),x-y\rangle\geq\,(>)\,0;
$$
\item $\beta$-cocoercive with $\beta>0$, if for all $x, y \in \op{dom}(F)$
$$
\langle F(x)-F(y),x-y\rangle \geq \beta\|F(x)-F(y)\|^{2};
$$
\item firmly non expansive if for all $x, y \in \op{dom}(F)$
$$
\|F(x)-F(y)\|^{2} \leq\|x-y\|^{2}-\|(\mathrm{Id}-F) (x)-(\mathrm{Id}-F) (y)\|^{2}.
$$
\end{itemize}
We use the adjective ``restricted'' if a property holds for all $(x,y)\in\op{dom}(F)\times \op{fix}(F)$.
\fineass\end{definition}
An example of firmly non expansive operator is the projection operator over a nonempty, compact and convex set \cite[Proposition 4.16]{bau2011}. We note that a firmly non expansive operator is also cocoercive, hence monotone and non expansive \cite[Definition 4.1]{bau2011}.

%


\section{Mathematical background: Generalized Nash equilibrium problems}\label{sec_GNEPs}
We consider a set $\mc I=\{1,\dots,N\}$ of noncooperative agents, each of them choosing its strategy $x_i\in\RR^{n_i}$ from its local decision set $\Omega_i\subseteq\RR^{n_i}$. The aim of each agent is to minimize its local cost function within its feasible strategy set. We call $\bs{x}_{-i}=\op{col}(\{x_j\}_{j \neq i})$ the decisions of all the agents with the exception of $i$ and set $n=\sum_{i=1}^N n_i$. 
The local cost function of agent $i$ is a function $J_i(x_i,\bs x_{-i}):\RR^{n_i}\times\RR^{n-n_i}\to\RR$ that depends on both the local variable $x_i$ and the decision of the other agents $\bs x_{-i}$. The cost function has the form
\begin{equation}\label{eq:f}
J_{i}(x_{i}, \bs x_{-i}):=f_{i}(x_{i},\bs x_{-i}) + g_{i}(x_{i}),
\end{equation}
that present the typical splitting in smooth and non smooth parts. We assume that the non smooth part is represented by $g_i:\RR^{n_i}\to\bar\RR$. We note that it can model not only a local cost, but also local constraints via an indicator function, e.g. $g_{i}(x_{i})=\iota_{\Omega_i}(x_i)$.
\begin{standassumption}[Local cost]\label{ass_G}
For each $i\in\mc I$, the function $g_i$ in \eqref{eq:f} is lower semicontinuous and convex and $\op{dom}(g_{i})=\Omega_i$ is nonempty, compact and convex.
\fineass\end{standassumption}



\begin{standassumption}[Cost functions convexity]\label{ass_J_det}
For each $i \in \mc I$ and $\boldsymbol{x}_{-i} \in \mc{X}_{-i}$ the function $f_{i}(\cdot, \boldsymbol{x}_{-i})$ is convex and continuously differentiable.
\fineass\end{standassumption}

Furthermore, we consider a game with affine shared constraints $A\bs x\leq b$. Therefore, the feasible decision set of each agent $i \in \mc I$ is denoted by the set-valued mapping $\mc X_i,$:
\begin{equation}\label{constr}
\mc{X}_i(\bs{x}_{-i}) :=\textstyle\left\{y_i \in \Omega_i\; |\; A_i y_i \leq b-\sum_{j \neq i}^{N} A_j x_j\right\},
\end{equation}
where $A_i \in \RR^{m \times n}$ and $b\in\RR^m$. The set $\Omega_i$ represents the local decision set for agent $i$, while the matrix $A_i$ defines how agent $i$ is involved in the coupling constraints. The collective feasible set can be written as
\begin{equation}\label{collective_set}
\bs{\mc{X}}=\left\{\bs y \in\bs\Omega\; | \;A\bs y-b \leq {\bf{0}}_{m}\right\}
\end{equation}
where $\bs\Omega=\prod_{i=1}^N\Omega_i$ and $A=\left[A_{1}, \ldots, A_{N}\right]\in\RR^{m\times n}$. 


\begin{standassumption}[Constraints qualification]\label{ass_X}
The set $\bs{\mc{X}}$ satisfies Slater's constraint qualification. 
\fineass\end{standassumption}

The aim of each agent $i$, given the decision variables of the other agents $\bs{x}_{-i}$, is to choose a strategy $x_i$, that solves its local optimization problem, i.e.,
\begin{equation}\label{game_det}
\forall i \in \mc I: \quad\left\{\begin{array}{cl}
\min\limits _{x_i \in \Omega_i} & J_i\left(x_i, \bs{x}_{-i}\right) \\ 
\text { s.t. } & A_i x_i \leq b-\sum_{j \neq i}^{N} A_j x_j.
\end{array} \right.
\end{equation}
When the optimization problems are simultaneously solved, the solution concept that we are seeking is that of generalized Nash equilibrium.
\begin{definition}\label{def_GNE}
A Generalized Nash equilibrium (GNE) is a collective strategy $\bs x^*\in\bs{\mc X}$ such that, for all $i \in \mc I$,
$$J_i(x_i^{*}, \boldsymbol x_{-i}^{*}) \leq \inf \{J_i(y, \boldsymbol x_{-i}^{*})\; | \; y \in \mc{X}_i(\bs x_{-i})\}\vspace{-.55cm}.$$
\fineass\end{definition}

In other words, a GNE is a set of strategies where no agent can decrease its objective function by unilaterally deviating from its decision.
While, under Assumptions \ref{ass_G}--\ref{ass_X}, existence of GNE of the game is guaranteed by \cite[Section 4.1]{facchinei2010}, uniqueness does not hold in general \cite[Section 4.3]{facchinei2010}. 

Within all the possible Nash equilibria, we focus on those that corresponds to the solution set of an appropriate variational inequality. 
To this aim, let the (pseudo) gradient of the cost function be
\begin{equation}\label{eq_sub}
F(\bs{x})=\op{col}\left(\nabla_{x_{1}} f_{1}\left(x_{1}, \bs{x}_{-1}\right), \dots, \nabla_{x_{N}} f_{N}\left(x_{N}, \bs{x}_{-N}\right)\right),
\end{equation}
and let
\begin{equation}\label{eq_G}
G(\bs x)=\op{col}(\partial g_1(x_1),\dots,\partial g_N(x_N)).
\end{equation}
Formally, the \textit{variational inequality} problem, $\op{VI}(\bs{\mc X},F,G)$, is \cite[Definition 26.19]{bau2011}: find $\bs x^{*} \in \bs{\mc X}$ such that
\begin{equation}\label{eq_vi}
\langle F(\bs x^*),\bs x-\bs x^*\rangle+G(\bs x)-G(\bs x^*)\geq 0\text { for all } \bs x \in \bs{\mc X}.
\end{equation}
\begin{remark}
Under Standing Assumptions \ref{ass_G}--\ref{ass_X}, the solution set of $\op{VI}(\bs{\mc X},F,G)$ is non empty and compact, i.e. $\op{SOL}(\bs{\mc X},F,G)\neq\varnothing$ \cite[Corollary 2.2.5]{facchinei2007}.
\end{remark}

When Standing Assumptions \ref{ass_G}--\ref{ass_X} hold, any solution of $\op{VI}(\bs{\mc X},F,G)$ in \eqref{eq_vi} is a GNE of the game in (\ref{game_det}), while vice versa does not hold in general. Indeed, there may be Nash equilibria that are not solution of the VI \cite[Proposition 12.7]{palomar2010}. The GNE that are also solution of the associated VI are called variational equilibria (v-GNE).


The GNEP can be recasted as a monotone inclusion, namely, the problem of finding a zero of a set-valued monotone operator. To this aim, we characterize a GNE of the game in terms of the Karush--Kuhn--Tucker (KKT) conditions of the coupled optimization problems in (\ref{game_det}). 
Then, the set of strategies $\bs x^{*}$ is a GNE if and only if the corresponding KKT conditions are satisfied \cite[Theorem 4.6]{facchinei2010}. Moreover, since the variational problem is a minimization problem, appropriate KKT conditions hold also in this case. The interest in such conditions is due to \cite[Theorem 3.1]{facchinei2007vi} that provides a criteria to select GNE that are also solution of the VI. Specifically, under assumptions \ref{ass_G}--\ref{ass_X}, \cite[Theorem 3.1]{facchinei2007vi} establishes that the equilibrium points are those such that the shared constraints have the same dual variable for all the agents. We refer also to \cite[Theorem 3.1]{auslender2000} for a more general result.

%
%

\begin{remark}\label{remark_NE}
For non-generalized Nash equilibrium problems, being a solution of the VI is necessary and sufficient to be a Nash equilibrium. Formally, a collective strategy $\bs x^*\in\mc X$ is a Nash equilibrium of the game if and only if $\bs x^*$ satisfies \eqref{eq_vi} \cite[Proposition 1.4.2]{facchinei2007}.
\fineass\end{remark}

In \cite{yi2019}, the authors propose a distributed preconditioned FB algorithm as in Algorithm \ref{algo_FB_det_i}. Inspired by this work, in the following sections we describe SGNEPs and propose the stochastic counterpart of Algorithm \ref{algo_FB_det_i} (Algorithm \ref{algo_FB_stoc_i}). Therefore, more details on the preconditioning procedure and on how to obtain the algorithm are described in Section \ref{sec_algo_SGNEP}.
\begin{remark}
When the local cost function is the indicator function, we can use the projection on the local feasible set $\Omega_i$, instead of the proximal operator \cite[Example 12.25]{bau2011}.
\fineass\end{remark}

\begin{algorithm}[h]
\caption{Preconditioned Forward Backward \cite{yi2019}}\label{algo_FB_det_i}
Initialization: $x_i^0 \in \Omega_i, \lambda_i^0 \in \RR_{\geq0}^{m},$ and $z_i^0 \in \RR^{m} .$\\
Iteration $k$: Agent $i$\\
($1$) Receives $x_j^k$ for $j \in \mathcal{N}_{i}^{J}, \lambda_j^k$ for $j \in \mathcal{N}_{i}^{\lambda}$, then updates
$$\begin{aligned}
&x_i^{k+1}=\op{prox}_{g_i}[x_i^k-\alpha_i(\nabla_{x_{i}} J_{i}(x_i^k,\boldsymbol x_{-i}^k)-A_{i}^{\top} \lambda_i^k)]\\
&z_i^{k+1}=z_i^k-\nu_{i} \sum_{j \in \mathcal{N}_{i}^{\lambda}} w_{i,j}(\lambda_i^k-\lambda_j^k)
\end{aligned}$$
($2$) Receives $z_{j, k+1}$ for $j \in \mathcal{N}_{i}^{\lambda}$, then updates
$$\begin{aligned}
 \lambda_i^{k+1}=&\op{proj}_{\RR^m_{\geq 0}}\{\lambda_i^k-\sigma_i[A_{i}(2 x_i^{k+1}-x_i^k)-b_{i}\\
&+\sigma_i\sum_{j \in \mathcal{N}_{i}^{\lambda}} w_{i,j}[2(z_i^{k+1}-z_{j, k+1})-(z_i^k-z_j^k)]\\
&+\sigma_i\sum_{j \in \mathcal{N}_{i}^{\lambda}} w_{i,j}(\lambda_i^k-\lambda_j^k)]\}\\
\end{aligned}$$
\end{algorithm}

\section{Stochastic generalized Nash equilibrium problems}\label{sec_SGNEPs}
\subsection{Stochastic equilibrium problem formulation}
In this section we describe the stochastic counterpart of GNEPs (SGNEPs). With SGNEP we mean a GNEP where the cost function is an expected value function. As in the deterministic case, we consider a set of agents $\mc I=\{1,\dots,N\}$ with strategies $x_i\in\Omega_i\subseteq\RR^{n_i}$. Each of the agents seek to minimize its local cost function within its feasible strategy set $\Omega_i$ that satisfy Standing Assumption \ref{ass_G}.
The local cost function of agent $i$ is defined as 
\begin{equation}\label{eq_cost_stoc}
\JJ_i(x_i,\bs{x}_{-i}):=\EE_\xi[f_i(x_i,\bs{x}_{-i},\xi(\omega))] + g_{i}(x_{i}).
\end{equation}
for some measurable function $f_i:\mc \RR^{n}\times \RR^d\to \RR$. We suppose that the cost functions in \eqref{eq_cost_stoc} satisfy Standing Assumptions \ref{ass_G} and \ref{ass_J_det}. The uncertainty is expressed through the random variable $\xi:\Xi\to\RR^d$ where $(\Xi, \mc F, \PP)$ is the probability space. The cost function depends on both the local variable $x_i$, the decision of the other agents $\bs x_{-i}$ and on the random variable $\xi$. $\EE_\xi$ represent the mathematical expectation with respect to the distribution of the random variable $\xi(\omega)$\footnote{From now on, we use $\xi$ instead of $\xi(\omega)$ and $\EE$ instead of $\EE_\xi$.}. We assume that $\EE[f_i(\bs{x},\xi)]$ is well defined for all the feasible $\bs{x}\in\bs{\mc X}$.



Since we consider a SGNEP, the feasible decision set of each agent $i \in \mc I$ is denoted by the set-valued mapping $\mc X_i$  as in \eqref{constr} and the collective feasible set is $\bs{\mc X}$ as in \eqref{collective_set}.
We suppose that there is no uncertainty in the constraints and that Assumption \ref{ass_G} and \ref{ass_X} are satisfied.

The aim of each agent $i$, given the decision variables of the other agents $\bs{x}_{-i}$, is to choose a strategy $x_i$, that solves its local optimization problem, i.e.,
\begin{equation}\label{eq_game_stoc}
\forall i \in \mc I: \quad \left\{\begin{array}{cl}
\min\limits _{x_i \in \Omega_i} & \JJ_i\left(x_i, \bs{x}_{-i}\right) \\ 
\text { s.t. } & A_i x_i \leq b-\sum_{j \neq i}^{N} A_j x_j.
\end{array}\right.
\end{equation}
We aim to compute a stochastic Generalized Nash equilibrium (SGNE) as in Definition \ref{def_GNE} but with expected value cost functions, that is, a collective strategy $\bs x^*\in\bs{\mc X}$ such that for all $i \in \mc I$
\begin{equation}\label{eq_SGNE}
\JJ_i(x_i^{*}, \boldsymbol x_{-i}^{*}) \leq \inf \{\JJ_i(y, \boldsymbol x_{-i}^{*})\; | \; y \in \mc{X}_i(\boldsymbol x_{-i}^{*})\}.
\end{equation}

To guarantee the existence of a SGNE, we make further assumptions on the cost function.
\begin{standassumption}[Cost functions measurability]\label{ass_J_exp}
For each $i\in\mc I$ and for each $\xi \in \Xi$, the function $f_{i}(\cdot,\boldsymbol x_{-i},\xi)$ is convex, Lipschitz continuous, and continuously differentiable. The function $f_{i}(x_i,\bs x_{-i},\cdot)$ is measurable and for each $\boldsymbol x_{-i}$, the Lipschitz constant $\ell_i(\boldsymbol x_{-i},\xi)$ is integrable in $\xi$.
\fineass\end{standassumption}

While, under Standing Assumptions \ref{ass_G}--\ref{ass_J_exp}, existence of SGNE of the game is guaranteed by \cite[Section 3.1]{ravat2011}, uniqueness does not hold in general \cite[Section 3.2]{ravat2011}.

As for the deterministic case we seek for a v-GNE, we here study the associated stochastic variational inequality (SVI). The (pseudo) gradient mapping is given, in this case, by
\begin{equation}\label{eq_grad_stoc}
\FF(\bs{x})=\op{col}\left((\EE[\nabla_{x_{i}} f_{i}(x_{i}, \bs{x}_{-i},\xi)])_{i\in\mc I}\right).
\end{equation} 
The possibility to exchange the expected value and the gradient is guaranteed by Standing Assumption \ref{ass_J_exp}. The associated SVI reads as
\begin{equation}\label{eq_svi}
\langle \FF(\bs x^*),\bs x-\bs x^*\rangle+G(\bs x)-G(\bs x^*)\geq 0\text { for all } \bs x \in \bs{\mc X}
\end{equation}
where $G$ is defined as in \eqref{eq_G}.

The stochastic v-GNE (v-SGNE) of the game in (\ref{eq_game_stoc}) is defined as the solution of the $\op{SVI}(\bs{\mc X},\FF,G)$ in (\ref{eq_svi}) where $\FF$ is described in (\ref{eq_grad_stoc}) and $\bs{\mc X}$ is defined in (\ref{collective_set}). 

In what follows, we recast the SGNEP into a monotone inclusion. For each agent $i\in\NN$, the Lagrangian function $\mc L_i$ is defined as
$$\mc L_i\left(\bs{x}, \lambda_i\right) :=\JJ_i\left(x_i, \bs{x}_{-i}\right)+g_i\left(x_i\right)+\lambda_i^{\top}(A \bs{x}-b)$$
where $\lambda_i \in \RR_{ \geq 0}^{m}$ is the Lagrangian dual variable associated with the coupling constraints. 
The set of strategies $\bs x^{*}$ is a SGNE if and only if the following KKT conditions are satisfied:
\begin{equation}\label{game_kkt}
\left\{\begin{array}{l}
0 \in \EE[\nabla_{x_i} f_i(x_i^{*}, \bs{x}_{-i}^{*},\xi)]+\partial g_i\left(x_i^{*}\right)+A_i^{\top} \lambda_i \\ [5pt]
0\in (A\bs{x}^*-b)+\op N_{\RR^m_{\geq 0}}(\lambda^*).
\end{array}\right.
\end{equation}


Similarly, we can use the KKT conditions to characterize the variational problem, studying the Lagrangian function associated to the SVI. Since $\bs x^{*}$ is a solution of $\op{SVI}(\bs{\mc X},\FF,G)$ if and only if 
$$\bs x^{*} \in \stackbin[\bs y \in \bs{\mc{X}}]{}{\op{argmin}}\left(\bs y-\bs x^{*}\right)^{\top} \FF\left(\bs x^{*}\right),$$ 
the associated KKT optimality conditions reads as
\begin{equation}\label{VI_KKT}
 \forall i \in \mc I : \quad\begin{cases}
0 \in \EE[\nabla_{x_i} f_i(x_i^{*}, \bs{x}_{-i}^{*},\xi)]+\partial g_i\left(x_i^{*}\right)+A_i^{\top} \lambda,\\ 
0\in (A\bs{x}^*-b)+\op N_{\RR^m_{\geq 0}}(\lambda^*).
\end{cases}
\end{equation}
As exploited by \cite[Theorem 3.1]{facchinei2007vi}, \cite[Theorem 3.1]{auslender2000}, we seek for a v-SGNE, that is, an equilibrium that reach consensus of the dual variables.

\subsection{Stochastic preconditioned forward-backward algorithm}\label{sec_algo_SGNEP}

We now describe the details of the preconditioning procedure that leads to the distributed iterations presented in Algorithm \ref{algo_FB_stoc_i}.

\begin{algorithm}
\caption{Preconditioned Stochastic Forward--Backward}\label{algo_FB_stoc_i}
Initialization: $x_i^0 \in \Omega_i, \lambda_i^0 \in \RR_{\geq0}^{m},$ and $z_i^0 \in \RR^{m} .$\\
Iteration $k$: Agent $i$\\
(1): Receives $x_j^k$ for all $j \in \mathcal{N}_{i}^{h}, \lambda_{j}^k$ for $j \in \mathcal{N}_{i}^{\lambda}$ then updates:
$$\begin{aligned}
&x_i^{k+1}=\op{prox}_{g_i}[x_i^k-\alpha_{i}(\hat F_{i}(x_i^k, \boldsymbol{x}_{-i}^k,\xi_i^k)-A_{i}^\top \lambda_i^k)]\\
&z_i^{k+1}=z_i^k-\nu_{i} \sum_{j \in \mathcal{N}_{i}^{\lambda}} w_{i,j}(\lambda_i^k-\lambda_{j}^k)\\
\end{aligned}$$
(2): Receives $z_{j, k+1}$ for all $j \in \mathcal{N}_{i}^{\lambda}$ then updates:
$$\begin{aligned}
&\lambda_i^{k+1}=\op{proj}_{\RR_{+}^{m}}\left[\lambda_i^k+\sigma_{i}\left(A_{i}(2x_i^{k+1}-x_i^k)-b_{i}\right)\right.\\
&\qquad+\sigma_{i}\sum_{j \in \mathcal{N}_{i}^{\lambda}} w_{i,j}\left(2(z_i^{k+1}-z_{j}^{k+1})-(z_i^k-z_{j}^k)\right)\\
&\qquad-\sigma_{i}\sum_{j \in \mathcal{N}_{i}^{\lambda}} \left.w_{i,j}(\lambda_i^k-\lambda_{j}^k)\right]\\
\end{aligned}$$
\end{algorithm}

We suppose that each agent $i$ only knows its local data, i.e., $\Omega_i$, $A_i$ and $b_i$. Moreover, each player has access to a pool of samples of the random variable and is able to compute, given the actions of the other players $\bs x_{-i}$, $\EE[\nabla_{x_i} f_i(x_i,\bs{x}_{-i},\xi)]$ (or an approximation, as exploited later in this section). 

Since the cost function is affected by the other agents strategies, we call $\mc N_i^f$ the set of agents $j$ interacting with $i$. Specifically, $j\in\mc N_i^f$ if the function $f_i(x_i,\bs x_{-i})$ explicitly depends on $x_j$.

A local copy of the dual variable is shared through the dual variables graph, $\mc G^\lambda=(\mc I,\mc E^\lambda)$. Along with the dual variable, agents share on $\mc G^\lambda$ a copy of the auxiliary variable $z_i\in\RR^m$. The role of $\bs z=\op{col}(z_1,\dots,z_N)$ is to force consensus, since this is the configuration that we are seeking. A deeper insight on this variable is given later in this section.
The set of edges $\mc E^\lambda$ is given by: $(i,j)\in\mc E^\lambda$ if player $i$ can receive $\{\lambda_j,z_j\}$ from player $j$. The neighbouring agents in $\mc G^\lambda$ forms a set $\mc N^\lambda_i=\{j\in\mc I:(i,j)\in\mc E^\lambda\}$ for all $i\in\mc I$. In this way, each agent control his own decision variable, a local copy of the dual variable $\lambda_i$ and of the auxiliary variable $z_i$ and has access to the other agents variables through the graphs. 

\begin{standassumption}[Graph connectivity]\label{ass_graph}
The dual-variable graph $G_\lambda$ is undirected and connected.
\fineass\end{standassumption}

We call $W\in\RR^{N\times N}$ the weighted adjacency matrix of $\mc G^\lambda$. Then, letting $d_i=\sum_{j=1}^Nw_{i,j}$ and $D=\op{diag}\{d_1,\dots,d_N\}$, the associated Laplacian is the matrix $L=D-W$. Moreover, it follows from Standing Assumption \ref{ass_graph} that $L=L^\top$.
Assumption \ref{ass_graph} is important to reach consensus of the dual variables.

Rewriting the KKT conditions in \eqref{VI_KKT} in compact form as
\begin{equation}\label{eq_T}
0\in\mc T(\bs{x},\bs\lambda):=\left[\begin{array}{c}
G(\bs{x})+\FF(\bs{x})+A^{\top} \lambda \\ 
\mathrm{N}_{\RR_{ \geq 0}^{m}}(\lambda)-(A \bs{x}-b)
\end{array}\right]
\end{equation}
where $\mc T:\mc X\times \RR^m_{\geq 0}\rightrightarrows \RR^{n}\times\RR^m$ is a set-valued mapping, it follows that the v-SGNE correspond to the zeros of the mapping $\mc T$. 

In the remaining part of this section, we split $\mc T$ into the summation of two operators $\mc A$ and $\mc B$ that satisfy specific properties. The advantage of this technique is that the zeros of the mapping $\mc A+\mc B$ correspond to the fixed point of a specific operator depending on both $\mc A$ and $\mc B$, as exploited in \cite{belgioioso2018,yi2019}. Such a scheme is known as forward backward (FB) splitting \cite[Section 26.5]{bau2011}. Indeed, it holds that, for any matrix $\Phi\succ0$, $\bs\omega\in\op{zer}(\mc A+\mc B)$ if and only if, 
$$\bs\omega=(\mathrm{Id}+\Phi^{-1} \mc B)^{-1} \circ(\mathrm{Id}-\Phi^{-1} \mc A)(\bs\omega).$$

Specifically, the operator $\mc T$ can be written as a summation of the two operators
\begin{equation}\label{eq_splitting}
\begin{aligned}
\mc{A} &:\left[\begin{array}{l}
\bs{x} \\
\lambda
\end{array}\right] \mapsto\left[\begin{array}{c}
\FF(\bs{x}) \\ 
b
\end{array}\right]\\
\mc{B} &:\left[\begin{array}{l}
\bs{x} \\ 
\lambda
\end{array}\right] \mapsto\left[\begin{array}{c}
G(\bs{x}) \\ 
\mathrm{N}_{\RR_{ \geq 0}^{m}}(\lambda)
\end{array}\right]+\left[\begin{array}{cc}
0 & A^{\top} \\ 
-A & 0
\end{array}\right]\left[\begin{array}{l}
\bs{x} \\ 
\lambda
\end{array}\right].
\end{aligned}
\end{equation}

Therefore, finding a solution of the variational SGNEP translates in finding a pair $(\bs x^*,\bs \lambda^*)\in\bs{\mc X}\times \RR_{\geq0}^m$ such that $(\bs x^*,\bs\lambda^*)\in\op{zer}(\mc A+\mc B)$.

To impose consensus on the dual variables, the authors in \cite{yi2019} proposed the Laplacian constraint ${\bf{L}}\bs\lambda=0$. This is why, to preserve monotonicity we expand the two operators $\mc A$ and $\mc B$ in \eqref{eq_splitting} introducing the auxiliary variable $\bs z\in\RR^{Nm}$.
Let $L\in\RR^{N\times N}$ be the laplacian of $\mc G^\lambda$  and set ${\bf{L}}=L\otimes \op{Id}_m\in\RR^{Nm\times Nm}$. Let us define ${\bf{A}}=\op{diag}\{A_1,\dots,A_N\}\in\RR^{Nm\times n}$ and $\bs \lambda=\op{col}(\lambda_1,\dots,\lambda_N)\in\RR^{Nm}$; similarly we define $\bs b$ of suitable dimensions. Then, let us define

\begin{equation}\label{eq_expanded_stoc}
\begin{aligned}
\bar{\mc{A}} &:\left[\begin{array}{l}
\bs{x} \\
\bs z\\
\bs \lambda
\end{array}\right] \mapsto\left[\begin{array}{c}
\FF(\bs{x}) \\ 
0\\
\bs b
\end{array}\right]+\left[\begin{array}{c}
0 \\ 
0\\
{\bf{L}}\bs\lambda
\end{array}\right]\\
\bar{\mc{B}} &:\left[\begin{array}{l}
\bs{x} \\ 
\bs z\\
\bs \lambda
\end{array}\right] \mapsto\left[\begin{array}{c}
G(\bs{x}) \\ 
{\bf{0}}\\
\mathrm{N}_{\RR_{ \geq 0}^{m}}(\lambda)
\end{array}\right]+\left[\begin{array}{ccc}
0 & 0 & {\bf{A}}^{\top} \\ 
0 & 0 & {\bf{L}}\\
-{\bf{A}} & -{\bf{L}} & 0
\end{array}\right]\left[\begin{array}{l}
\bs{x} \\ 
\bs z\\
\bs \lambda
\end{array}\right].
\end{aligned}
\end{equation}

To ensure that the zeros of $\bar{\mc A}+\bar{\mc B}$ correspond to the zeros of the operator $\mc T$ in \eqref{eq_T}, we take the following assumption.
\begin{assumption}[Restricted cocoercivity]\label{ass_res_coco}
$\FF$ is restricted $\beta$-cocoercive, with $\beta>0$. 
\fineass\end{assumption}
Then, the following result holds.
\begin{lemma}\label{lemma_zero}
Let Assumption \ref{ass_res_coco} hold and consider the operators $\bar{\mc A}$ and $\bar{\mc B}$ in (\ref{eq_expanded_stoc}), and the operators $\mc A$ and $\mc B$ in (\ref{eq_splitting}). Then the following hold.
\begin{enumerate}
\item[(i)] Given any $\bs\omega^* \in \op{zer}(\bar{\mc A}+\bar{\mc B})$, $\bs{x}^{*}$ is a v-GNE of game in (\ref{game_det}), i.e., $\bs{x}^*$ solves the $\op{VI}(\bs{\mc X},\FF,G)$ in (\ref{eq_vi}). Moreover $\bs{\lambda}^{*}=\mathbf{1}_{N} \otimes \lambda^{*},$ and $(\bs{x}^*,\lambda^{*})$ satisfy the KKT condition in (\ref{VI_KKT}) i.e., $\op{col}(\bs{x}^*, \lambda^{*}) \in \op{zer}(\mc A+\mc B)$;
\item[(ii)] $\op{zer}(\mc A+\mc B) \neq \emptyset$ and $\op{zer}(\bar{\mc A}+\bar{\mc B}) \neq \varnothing$.
\end{enumerate}
\end{lemma}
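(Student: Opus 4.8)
The plan is to prove (i) by unpacking the single inclusion $\bs 0\in(\bar{\mc A}+\bar{\mc B})(\bs\omega^*)$ into its three row blocks and then reducing the extended system back to the KKT system \eqref{VI_KKT}. Writing $\bs\omega^*=\op{col}(\bs x^*,\bs z^*,\bs\lambda^*)$, the middle block reads ${\bf L}\bs\lambda^*=0$. Since the dual graph is undirected and connected (Standing Assumption \ref{ass_graph}), the Laplacian satisfies $\ker(L)=\op{span}\{\uno_N\}$, hence $\ker({\bf L})=\ker(L\otimes\op{Id}_m)=\uno_N\otimes\RR^m$; together with $\bs\lambda^*\in\RR^{Nm}_{\geq0}$ this yields the consensus $\bs\lambda^*=\uno_N\otimes\lambda^*$ for some $\lambda^*\in\RR^m_{\geq0}$, which is the first assertion of (i).

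Next I would substitute the consensus form into the remaining two blocks. The top block, $\bs 0\in\FF(\bs x^*)+G(\bs x^*)+{\bf A}^\top\bs\lambda^*$, becomes componentwise $0\in\EE[\nabla_{x_i}f_i(x_i^*,\bs x_{-i}^*,\xi)]+\partial g_i(x_i^*)+A_i^\top\lambda^*$, i.e. the stationarity line of \eqref{VI_KKT}. For the bottom block, $\bs 0\in\bs b-{\bf A}\bs x^*-{\bf L}\bs z^*+\mathrm{N}_{\RR^{Nm}_{\geq0}}(\bs\lambda^*)$ (using ${\bf L}\bs\lambda^*=0$), I would left-multiply by $\uno_N^\top\otimes\op{Id}_m$. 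Because $\uno_N^\top L=0$ the auxiliary term $(\uno_N^\top\otimes\op{Id}_m){\bf L}\bs z^*$ vanishes; the aggregation gives $\sum_i A_ix_i^*=A\bs x^*$ and $\sum_i b_i=b$; and since $\mathrm{N}_{\RR^{Nm}_{\geq0}}(\uno_N\otimes\lambda^*)$ is the product of $N$ copies of the cone $\mathrm{N}_{\RR^m_{\geq0}}(\lambda^*)$, whose sum collapses back to $\mathrm{N}_{\RR^m_{\geq0}}(\lambda^*)$, I recover the second line of \eqref{VI_KKT}. Thus $\op{col}(\bs x^*,\lambda^*)\in\op{zer}(\mc A+\mc B)$, and since these are exactly the KKT conditions of the $\op{SVI}(\bs{\mc X},\FF,G)$ in \eqref{eq_svi}, $\bs x^*$ solves the variational inequality and is therefore a variational equilibrium (invoking the KKT--VI correspondence of \cite{facchinei2007vi}).

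For (ii) I would first establish $\op{zer}(\mc A+\mc B)\neq\varnothing$ by combining the nonemptiness of $\op{SOL}(\bs{\mc X},\FF,G)$ (the Remark, via \cite[Corollary 2.2.5]{facchinei2007}) with the existence of KKT multipliers guaranteed by Slater's qualification (Standing Assumption \ref{ass_X}): any variational equilibrium admits a multiplier $\lambda^*\geq0$ solving \eqref{VI_KKT}, i.e. a zero of $\mc A+\mc B$. For $\op{zer}(\bar{\mc A}+\bar{\mc B})\neq\varnothing$ I would lift such a zero: set $\bs\lambda^*=\uno_N\otimes\lambda^*$ (so ${\bf L}\bs\lambda^*=0$) and seek $\bs z^*$ solving ${\bf L}\bs z^*=\bs b-{\bf A}\bs x^*+\bs w^*$ for some $\bs w^*\in\mathrm{N}_{\RR^{Nm}_{\geq0}}(\bs\lambda^*)$. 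As ${\bf L}$ is symmetric, $\op{range}({\bf L})=(\uno_N\otimes\RR^m)^\perp$, so the system is solvable precisely when $(\uno_N^\top\otimes\op{Id}_m)(\bs b-{\bf A}\bs x^*+\bs w^*)=0$, i.e. $\sum_i w_i^*=A\bs x^*-b$. Since $A\bs x^*-b\in\mathrm{N}_{\RR^m_{\geq0}}(\lambda^*)$ by \eqref{VI_KKT} and $0\in\mathrm{N}_{\RR^m_{\geq0}}(\lambda^*)$, the choice $w_1^*=A\bs x^*-b$, $w_i^*=0$ for $i\geq2$ is admissible, producing the required $\bs z^*$ and hence a point of $\op{zer}(\bar{\mc A}+\bar{\mc B})$.

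The main obstacle is the bottom-block bookkeeping in both directions: getting the normal-cone algebra right---that $\mathrm{N}_{\RR^{Nm}_{\geq0}}$ factorizes as a product of cones and that, under consensus, the aggregated sum collapses back to $\mathrm{N}_{\RR^m_{\geq0}}(\lambda^*)$---and, in the reverse lift, verifying the range condition needed to solve the Laplacian system for $\bs z^*$. Connectivity (Standing Assumption \ref{ass_graph}) enters decisively exactly here, through $\ker({\bf L})=\uno_N\otimes\RR^m$ and the dual identity $\uno_N^\top L=0$. The passage from the KKT conditions to membership in $\op{SOL}(\bs{\mc X},\FF,G)$ and the nonemptiness in (ii) rest on the constraint qualification (Standing Assumption \ref{ass_X}) and on the monotonicity supplied by Assumption \ref{ass_res_coco}, while the remaining manipulations are direct substitution.
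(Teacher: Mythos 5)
Your proof is correct and takes essentially the same route as the paper, which simply outsources part (i) to \cite[Theorem 2]{yi2019} and part (ii) to the existence of a VI solution plus KKT multipliers \cite[Theorem 3.1]{auslender2000} and the same Laplacian-kernel/normal-cone lifting for $\bs z^*$ that you spell out explicitly. One remark: the complementarity you derive, $A\bs x^*-b\in\mathrm{N}_{\RR^m_{\geq0}}(\lambda^*)$, is the one consistent with the operators in \eqref{eq_splitting} and \eqref{eq_T} (and with standard KKT theory); the opposite sign printed in \eqref{VI_KKT} is a typo in the paper, not a flaw in your argument.
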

\begin{proof} See Appendix \ref{proofs_op}.
\end{proof}

The two operators $\bar{\mc A}$ and $\bar{\mc B}$ in \eqref{eq_expanded_stoc} have the following properties.

\begin{lemma}\label{lemma_op}
Let Assumption \ref{ass_res_coco} hold and let $\Phi\succ 0$. The operators $\bar{\mc A}$ and $\bar{\mc B}$ in \eqref{eq_expanded_stoc} have the following properties:
\begin{enumerate}
\item[(i)] $\bar{\mc A}$ is $\theta$-cocoercive where $0<\theta \leq\min \left\{\frac{1}{2 d^{*}}, \beta\right\}$ and $d^*$ is the maximum weighted degree of $\mc G^\lambda$;
\item[(ii)] The operator $\bar{\mc B}$ is maximally monotone;
\item[(iii)] $\Phi^{-1}\bar{\mc A}$ is $\theta\gamma$-cocoercive where $\gamma=\frac{1}{|\Phi^{-1}|}$;
\item[(iv)] $\Phi^{-1}\bar{\mc B}$ is maximally monotone.
\end{enumerate}
\end{lemma}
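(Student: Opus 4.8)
The plan is to treat (i) and (ii) as the substantive claims and to obtain (iii) and (iv) by passing to the $\Phi$-induced inner product. Throughout I take two points $\bs\omega=\op{col}(\bs x,\bs z,\bs\lambda)$ and $\bs\omega'=\op{col}(\bs x',\bs z',\bs\lambda')$; since Assumption \ref{ass_res_coco} only grants \emph{restricted} cocoercivity (i.e.\ the property holds when one argument is an equilibrium), each cocoercivity claim below is understood in the restricted sense. For (i), I would first compute $\bar{\mc A}(\bs\omega)-\bar{\mc A}(\bs\omega')=\op{col}(\FF(\bs x)-\FF(\bs x'),\,0,\,{\bf{L}}(\bs\lambda-\bs\lambda'))$, so the middle block drops and $\langle\bar{\mc A}(\bs\omega)-\bar{\mc A}(\bs\omega'),\bs\omega-\bs\omega'\rangle$ splits as $\langle\FF(\bs x)-\FF(\bs x'),\bs x-\bs x'\rangle+\langle{\bf{L}}(\bs\lambda-\bs\lambda'),\bs\lambda-\bs\lambda'\rangle$. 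Restricted cocoercivity bounds the first term below by $\beta\normsq{\FF(\bs x)-\FF(\bs x')}$. For the second term the key observation is that ${\bf{L}}=L\otimes\op{Id}_m$ is symmetric positive semidefinite (as $L=L^\top$ by Standing Assumption \ref{ass_graph}), and that for any symmetric positive semidefinite $M$ one has $\langle Mv,v\rangle\geq|M|^{-1}\normsq{Mv}$ (eigendecompose $M$ and use $\mu_i^2\le|M|\mu_i$). Since $\op{spec}({\bf{L}})=\op{spec}(L)$ and the standard degree bound gives $|L|\le 2d^*$, this yields $\langle{\bf{L}}(\bs\lambda-\bs\lambda'),\bs\lambda-\bs\lambda'\rangle\geq\frac{1}{2d^*}\normsq{{\bf{L}}(\bs\lambda-\bs\lambda')}$. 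Adding the two bounds and using $\normsq{\bar{\mc A}(\bs\omega)-\bar{\mc A}(\bs\omega')}=\normsq{\FF(\bs x)-\FF(\bs x')}+\normsq{{\bf{L}}(\bs\lambda-\bs\lambda')}$, any $\theta\le\min\{\beta,\frac{1}{2d^*}\}$ works.

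For (ii), I would write $\bar{\mc B}=\bar{\mc B}_0+M$, where $\bar{\mc B}_0:\bs\omega\mapsto\op{col}(G(\bs x),\,{\bf 0},\,\op N_{\RR^m_{\ge0}}(\lambda))$ and $M$ is the block matrix in \eqref{eq_expanded_stoc}. The operator $\bar{\mc B}_0$ is maximally monotone because each diagonal block is: $G$ is the subdifferential of the proper lsc convex separable function $\sum_i g_i$ and $\op N_{\RR^m_{\ge0}}$ is the subdifferential of the indicator of the orthant, and a direct sum of maximally monotone operators is maximally monotone \cite[Prop.~20.23]{bau2011}. The matrix $M$ is skew-symmetric (one checks $M^\top=-M$, which holds precisely because ${\bf{L}}^\top={\bf{L}}$), hence a bounded, single-valued, monotone linear operator with full domain, and therefore maximally monotone. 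Maximal monotonicity of the sum then follows from \cite[Cor.~25.5]{bau2011}, since $\op{dom}(M)=\RR^{n+2Nm}$ makes the domain qualification automatic.

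Parts (iii) and (iv) amount to a change of metric. For (iii), set $\Delta:=\bar{\mc A}(\bs\omega)-\bar{\mc A}(\bs\omega')$; the $\Phi$-cocoercivity pairing collapses to the Euclidean one, $\langle\Phi^{-1}\Delta,\bs\omega-\bs\omega'\rangle_\Phi=\langle\Delta,\bs\omega-\bs\omega'\rangle\ge\theta\normsq{\Delta}$ by (i), whereas $\normsqphi{\Phi^{-1}\Delta}=\langle\Delta,\Phi^{-1}\Delta\rangle\le|\Phi^{-1}|\,\normsq{\Delta}$; combining the two gives $\langle\Phi^{-1}\Delta,\bs\omega-\bs\omega'\rangle_\Phi\ge\theta\gamma\,\normsqphi{\Phi^{-1}\Delta}$ with $\gamma=1/|\Phi^{-1}|$, which is exactly $\theta\gamma$-cocoercivity in the $\Phi$-metric. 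For (iv), monotonicity in the $\Phi$-inner product reduces identically to Euclidean monotonicity of $\bar{\mc B}$, and maximality is preserved since $(\op{Id}+\Phi^{-1}\bar{\mc B})^{-1}$ is single-valued with full domain in the Hilbert space $(\RR^{n+2Nm},\langle\cdot,\cdot\rangle_\Phi)$ by Minty's theorem applied there, so (ii) transfers verbatim.

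The main obstacle is the Laplacian estimate in (i): converting the merely semidefinite quadratic form $\langle{\bf{L}}\,\cdot,\cdot\rangle$ into a cocoercivity constant requires the spectral inequality $\langle Mv,v\rangle\ge|M|^{-1}\normsq{Mv}$ together with the degree bound $|L|\le 2d^*$. One must also track that only restricted cocoercivity of $\FF$ is available, so that the property obtained for $\bar{\mc A}$ (and its $\Phi$-version) is itself restricted; this restriction is exactly what the fixed-point analysis of the forward--backward iteration will later exploit.
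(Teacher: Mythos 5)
Your proof is correct and follows essentially the same route as the paper's: the same block-splitting of $\langle\bar{\mc A}(\bs\omega)-\bar{\mc A}(\bs\omega'),\bs\omega-\bs\omega'\rangle$ with the $\min\left\{\beta,\tfrac{1}{2d^*}\right\}$ bound for (i), the same decomposition of $\bar{\mc B}$ into stacked subdifferentials plus a skew-symmetric linear map for (ii), and the same passage to the $\Phi$-induced metric for (iii)--(iv). The only differences are cosmetic: where the paper invokes the Baillon--Haddad theorem for the $\tfrac{1}{2d^*}$-cocoercivity of the Laplacian and cites \cite[Lemma 7]{yi2019} for the metric transfer, you prove these facts directly (and you are more careful about the domain qualification in the sum rule and about the ``restricted'' caveat inherited from Assumption \ref{ass_res_coco}), which is sound.
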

\begin{proof} See Appendix \ref{proofs_op}.
\end{proof}


Since the expected value can be hard to compute, as the distribution of the random variable is unknown, we take an approximation of the pseudogradient. At this stage, it is not important which type of approximation we use, therefore, in what follows, we replace $\bar{\mc A}$ with 
\begin{equation}\label{eq_A_approx}
\hat{\mc A}:\left[\begin{array}{c}
(\bs{x},\xi) \\
\bs z\\
\bs \lambda
\end{array}\right] \mapsto\left[\begin{array}{c}
\hat F(\bs{x},\xi) \\ 
0\\
\bs b
\end{array}\right]+\left[\begin{array}{c}
0 \\ 
0\\
{\bs{L}}\bs\lambda
\end{array}\right]
\end{equation}
where $\hat F$ is an approximation of the expected value mapping $\FF$ in (\ref{eq_grad_stoc}) given some realization of the random vector $\xi$. 

The fixed point problem, given $\Phi\succ0$, now reads as
\begin{equation}\label{eq_fix_stoc}
\bs\omega=(\op{Id}+\Phi^{-1}\bar{\mc B})^{-1}\circ(\op{Id}-\Phi^{-1}\hat{\mc A})(\bs \omega)
\end{equation}
and suggests the stochastic FB algorithm
\begin{equation}\label{fixed_point_stoc}
\bs\omega^{k+1}=(\mathrm{Id}+\Phi^{-1} \mathcal{B})^{-1} \circ(\mathrm{Id}-\Phi^{-1} \hat{\mc A})(\bs\omega^k,\xi^k).
\end{equation}
where $(\mathrm{Id}+\Phi^{-1} \bar{\mathcal{B}})^{-1}$ represent the backward step and $(\mathrm{Id}-\Phi^{-1} \bar{\mc A})$ is the forward step.

By expanding (\ref{fixed_point_stoc}), we obtain the distributed FB steps in Algorithm \ref{algo_FB_stoc_i} with $\hat{\mc A}$ as in \eqref{eq_A_approx}, $\bar{\mc B}$ as in \eqref{eq_expanded_stoc} and 
\begin{equation}\label{eq_phi_k}
\Phi=\left[\begin{array}{ccc}
\alpha^{-1} & 0 & -{\bf{A}}^\top\\
0 & \nu^{-1} & -{\bf{L}}\\
-{\bf{A}} & -{\bf{L}} & \sigma^{-1}
\end{array}\right].
\end{equation} 
$\alpha^{-1}=\op{diag}\{\alpha_1^{-1}\op{I}_{n_1},\dots,\alpha_N^{-1}\op{I}_{n_N}\}\in\RR^{n\times n}$ and similarly we define $\sigma^{-1}$ and $\nu^{-1}$ of suitable dimensions.
We note that $\Phi$ is symmetric and such that $\bs\omega^k$ is easy to be computed and the iterations are sequential \cite{belgioioso2018}.
If we use the traditional FB algorithm with $\Phi=\op{Id}$, we have to compute the resolvent of $\bar{\mc B}$ that involves the constraint matrix $A$ and the Laplacian $L$, therefore, it could not be evaluated in a distributed way. With $\Phi$ as in \eqref{eq_phi_k}, we overcome this problem \cite{yi2019}.


\section{Convergence analysis with sample average approximation}\label{sec_conv}

We here state some sufficient assumptions for the convergence of Algorithm \ref{algo_FB_stoc_i} to a v-SGNE. Note that Algorithm \ref{algo_FB_stoc_i} involves an approximation of $\FF$ but it does not specify which one. Indeed, preconditioning can be done independently of the approximation scheme. Concerning the convergence analysis, however, we consider the sample average approximation (SAA) scheme. 


We assume the decision maker to have access to an increasing number $S_k$ of samples of the random variable $\xi$ and to be able to compute an approximation of $\FF(x)$ of the form
\begin{equation}\label{eq_F_SAA}
\begin{aligned}
&\hat F(\bs x,\bs\xi)=F_\textup{SAA}(\bs x,\bs\xi)\\
&=\op{col}\left(\frac{1}{S_k} \sum_{t=1}^{S_k} \nabla_{x_1}f_1(\bs x,\xi_1^{(t)}),\dots,\frac{1}{S} \sum_{t=1}^{S_k} \nabla_{x_N}f_N(\bs x, \xi_N^{(t)})\right).
\end{aligned}
\end{equation}
where $\bs \xi=\op{col}(\bar \xi_1,\dots,\bar\xi_n)$, for all $i\in\mc I$, $\bar\xi_i=\op{col}(\xi_i^{(1)},\dots,\xi_i^{(S_k)})$ and $\bs \xi$ is an i.i.d. sequence of random variables drawn from $\PP$.

Approximations of the form (\ref{eq_F_SAA}) are very common in Monte-Carlo simulation approaches, machine learning and computational statistics \cite{staudigl2019}. 


For $k \geq 0,$ let us introduce the approximation error 
\begin{equation}\label{eq_stoc_error}
\epsilon_k=F_\textup{SAA}(\bs x^k,\xi_k)-\FF(\bs x^k).
\end{equation}
\begin{remark}\label{remark_error}
Since there is no uncertainty in the constraints
$$\mc A_\textup{SAA}(\bs\omega^k,\xi^k)-\mc A(\bs\omega^k)=\varepsilon_k,$$
where $\mc A_\textup{SAA}$ is the operator $\hat{\mc A}$ with approximation $F_\textup{SAA}$ as in \eqref{eq_F_SAA} and $\varepsilon_k=\op{col}(\epsilon_k,0,0)$.
\fineass\end{remark}
The following assumption is widely used in the stochastic framework \cite{koshal2013,iusem2017}.
\begin{standassumption}[Zero mean error]\label{ass_error}
For al $k\geq 0$, 
$$\EEk{\epsilon_k}=0 \quad \op{ a.s.}\vspace{-.65cm}$$
\fineass\end{standassumption}

To guarantee that $\Phi$ is positive definite and to obtain convergence, the step size sequence can be taken constant but it should satisfy some bounds \cite[Lemma 6]{yi2019}.
\begin{assumption}[Bounded step sizes]\label{ass_bound}
The step size sequence is such that, given $\gamma>0$, for every agent $i\in\mc I$
$$\begin{aligned}
0&<\alpha_{i} \leq\left(\gamma+\max _{j\in\{1, \ldots, n_{i}\}}\sum\nolimits_{k=1}^{m}|[A_{i}^\top]_{j k}|\right)^{-1} \\ 
0&<\nu_{i} \leq\left(\gamma+2 d_{i}\right)^{-1}\\
0&<\sigma_{i} \leq\left(\gamma+2 d_{i}+\max _{j\in\{1, \ldots, m\}}\sum\nolimits_{k=1}^{n_{i}}|[A_{i}]_{j k}|\right)^{-1}\\
\end{aligned}$$
where $[A_i^\top]_{jk}$ indicates the entry $(j,k)$ of the matrix $A_i^\top$.
Moreover,
$$\norm{\Phi^{-1}}<2\theta,$$
where $\theta$ is the cocoercivity constant of $\bar{\mc A}$ as in Lemma \ref{lemma_op}.
\fineass\end{assumption}

The number of samples to be taken for the SAA must satisfy some conditions as well.
\begin{assumption}[Increasing batch size]\label{ass_batch}
The batch size sequence $(S_k)_{k\geq 1}$ is such that, for some $c,k_0,a>0$,
$$S_k\geq c(k+k_0)^{a+1}.\vspace{-.65cm}$$
\fineass\end{assumption}
This assumption implies that $1/S_k$ is summable, which is a standard assumption in SAA schemes. It is often used in combination with the forthcoming variance reduction assumption to control the stochastic error \cite{staudigl2019,iusem2017}.

\begin{assumption}[Variance reduction]\label{ass_variance}
There exist $p \geq 2$, $ \sigma_{1} \geq 0,$ and a measurable locally bounded function $\sigma : \op{SOL}(\bs{\mc X},\FF,G) \rightarrow \mathbb{R} $ such that for all $(\bs x,\bs x^*) \in \mathbb{R}^{n}\times\op{SOL}(\bs{\mc X},\FF,G)$ 
\begin{equation}\label{eq_var_red}
\EE\left[\norm{F_\textup{SAA}(\bs x, \cdot)-\FF(\bs x)}^{p}\right]^{\frac{1}{p}} \leq \sigma\left(\bs x^{*}\right)+\sigma_{1}\left\|\bs x-\bs x^{*}\right\|.
\end{equation}
\fineass\end{assumption}

\begin{remark}
For simplicity of presentation, let us consider a stronger assumption instead of \eqref{eq_var_red}, namely, for all $\bs x\in\bs{\mc X}$ 
\begin{equation}\label{variance}
\EEx{\norm{F_\textup{SAA}(\bs x, \cdot)-\FF(\bs x)}^{2}}\leq \sigma^2
\end{equation}
for some $\sigma>0$. In the literature, \eqref{variance} is known as uniform bounded variance. Assumption \ref{ass_variance} is more natural when the feasible set is unbounded and it is always satisfied when mapping $\FF$ is Caratheodory and Lipschitz continuous \cite[Ex. 3.1]{staudigl2019}. Since we are in a game theoretic setup, our feasible set is bounded, we can to use (\ref{variance}) as a variance control assumption. 

We also remark that all the following results hold also in the more general case given by Assumption \ref{ass_variance} and using the $L_p$ norm for any $p\geq 2$. We refer to \cite{staudigl2019, iusem2017} for a more detailed insight on this general case.
\fineass\end{remark}

We are now ready to state the convergence result for the SAA case.
\begin{theorem}\label{theo_SAA_sgne}
Let Assumptions \ref{ass_G}--\ref{ass_variance} hold. Then the sequence $(x^k)_{k\in\NN}$ generated by Algorithm \ref{algo_FB_stoc_i} with approximation $\hat F=F_{\textup{SAA}}$ as in \eqref{eq_F_SAA} converges a.s. to a $\mathrm{v}$-$\mathrm{SGNE}$ of the game in \eqref{eq_game_stoc}. 
\end{theorem}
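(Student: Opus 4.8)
The strategy is to read the stochastic recursion \eqref{fixed_point_stoc} as an inexact preconditioned forward--backward iteration in the Hilbert space $\mc H_\Phi=(\RR^{n+2Nm},\langle\cdot,\cdot\rangle_\Phi)$ and to show that $(\bs\omega^k)_k$ is a \emph{stochastic quasi-Fej\'er} sequence relative to $\op{zer}(\bar{\mc A}+\bar{\mc B})$. Write $R:=(\op{Id}+\Phi^{-1}\bar{\mc B})^{-1}$ for the backward (resolvent) step and $T:=R\circ(\op{Id}-\Phi^{-1}\bar{\mc A})$ for the exact forward--backward operator. By Lemma~\ref{lemma_op}(iv) the resolvent $R$ is firmly nonexpansive in the $\Phi$-norm, and by Lemma~\ref{lemma_zero} we have $\op{fix}(T)=\op{zer}(\bar{\mc A}+\bar{\mc B})\neq\varnothing$, every $\bs\omega^*\in\op{fix}(T)$ inducing a v-SGNE. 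Hence it suffices to prove $\bs\omega^k\to\bs\omega^*$ a.s.\ for some $\bs\omega^*\in\op{fix}(T)$, the two tools being a one-step descent inequality for $T$ and the Robbins--Siegmund supermartingale convergence theorem.

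First I would derive the one-step energy inequality. Fix $\bs\omega^*\in\op{fix}(T)$, so $\bs\omega^*=R(\bs\omega^*-\Phi^{-1}\bar{\mc A}\bs\omega^*)$. By Remark~\ref{remark_error} the realised iteration reads $\bs\omega^{k+1}=R\big(\bs\omega^k-\Phi^{-1}\bar{\mc A}\bs\omega^k-\Phi^{-1}\varepsilon_k\big)$ with $\varepsilon_k=\op{col}(\epsilon_k,0,0)$. The crucial point is to expand the squared $\Phi$-distance \emph{before} passing through $R$, so that the stochastic error enters linearly: setting $u^k:=\bs\omega^k-\Phi^{-1}\bar{\mc A}\bs\omega^k$ and $u^*:=\bs\omega^*-\Phi^{-1}\bar{\mc A}\bs\omega^*$, firm nonexpansiveness of $R$ gives $\normsqphi{\bs\omega^{k+1}-\bs\omega^*}\le\normsqphi{u^k-u^*-\Phi^{-1}\varepsilon_k}-\normsqphi{(\op{Id}-R)(u^k-\Phi^{-1}\varepsilon_k)-(\op{Id}-R)u^*}$. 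Expanding the first term produces $\normsqphi{u^k-u^*}$, a cross term $-2\langle u^k-u^*,\varepsilon_k\rangle$ (using $\langle v,\Phi^{-1}\varepsilon_k\rangle_\Phi=\langle v,\varepsilon_k\rangle$ by symmetry of $\Phi$), and $\normsqphi{\Phi^{-1}\varepsilon_k}$. Finally, the restricted $\theta$-cocoercivity of $\bar{\mc A}$ (Lemma~\ref{lemma_op}(i),(iii), which needs only Assumption~\ref{ass_res_coco} because the comparison point $\bs\omega^*$ is a solution) together with $\norm{\Phi^{-1}}<2\theta$ from Assumption~\ref{ass_bound} bounds $\normsqphi{u^k-u^*}\le\normsqphi{\bs\omega^k-\bs\omega^*}-c_0\norm{\bar{\mc A}\bs\omega^k-\bar{\mc A}\bs\omega^*}^2$ for some $c_0>0$.

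Next I would take the conditional expectation $\EEk{\cdot}$. Since $u^k-u^*$ is $\mc F_k$-measurable and $\EEk{\varepsilon_k}=0$ by Standing Assumption~\ref{ass_error}, the cross term vanishes in conditional mean; this is exactly why the pre-resolvent expansion was essential, for had the error been folded into $\bs\omega^{k+1}-T\bs\omega^k$ it would depend nonlinearly on $\epsilon_k$ and its conditional mean would not vanish. Collecting the nonnegative contributions into $Y_k\ge0$ (controlling both $\norm{\bar{\mc A}\bs\omega^k-\bar{\mc A}\bs\omega^*}^2$ and the firm-nonexpansiveness residual) yields $\EEk{\normsqphi{\bs\omega^{k+1}-\bs\omega^*}}\le\normsqphi{\bs\omega^k-\bs\omega^*}-Y_k+Z_k$ with $Z_k:=\EEk{\normsqphi{\Phi^{-1}\varepsilon_k}}$. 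For the SAA estimator \eqref{eq_F_SAA}, averaging $S_k$ conditionally i.i.d.\ samples under the variance bound \eqref{variance} gives $Z_k\le C\,\EEk{\norm{\epsilon_k}^2}\le C\sigma^2/S_k$, and Assumption~\ref{ass_batch} (with $a>0$) makes $\sum_k 1/S_k<\infty$, hence $\sum_k Z_k<\infty$ surely. The Robbins--Siegmund lemma then gives, almost surely, that $\normsqphi{\bs\omega^k-\bs\omega^*}$ converges and $\sum_k Y_k<\infty$, so $Y_k\to0$.

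Finally I would upgrade this to convergence of the whole sequence. From $Y_k\to0$ a.s.\ the forward and backward mismatches vanish, whence the fixed-point residual $\norm{T\bs\omega^k-\bs\omega^k}_\Phi\to0$; boundedness of $(\bs\omega^k)$ (from convergence of $\normsqphi{\bs\omega^k-\bs\omega^*}$) yields a subsequence $\bs\omega^{k_j}\to\bar{\bs\omega}$, and demiclosedness of $\op{Id}-T$ at $0$ (valid since $T$ is averaged under Assumption~\ref{ass_bound}) gives $\bar{\bs\omega}\in\op{fix}(T)$. Applying the already-established a.s.\ convergence of $\normsqphi{\bs\omega^k-\bs\omega^*}$ with $\bs\omega^*=\bar{\bs\omega}$ forces the limit to be $0$ along the subsequence and hence for the whole sequence, so $\bs\omega^k\to\bar{\bs\omega}$ a.s.; by Lemma~\ref{lemma_zero} the $\bs x$-block of $\bar{\bs\omega}$ is a v-SGNE of \eqref{eq_game_stoc}. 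I expect the main obstacle to be the rigorous almost-sure (as opposed to in-expectation) handling of the error together with the quasi-Fej\'er step: one must control the null sets uniformly over the candidate limit points $\bs\omega^*$, which is precisely what the stochastic quasi-Fej\'er / Robbins--Siegmund machinery delivers, and one must verify that the SAA variance bound yields an \emph{almost surely} summable $Z_k$ rather than merely a summable sequence of expectations.
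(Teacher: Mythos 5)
Your proposal is correct and follows essentially the same route as the paper's own proof: a quasi-Fej\'er one-step inequality in the $\Phi$-norm obtained from (firm) nonexpansiveness of the preconditioned resolvent together with restricted cocoercivity of $\bar{\mc A}$ and the bound $\norm{\Phi^{-1}}<2\theta$, the linear error term vanishing under $\EEk{\cdot}$ by Standing Assumption \ref{ass_error}, the SAA variance bound $\EEk{\normsq{\epsilon_k}}\le c\sigma^2/S_k$ combined with summability of $1/S_k$ from Assumption \ref{ass_batch}, and the Robbins--Siegmund lemma to conclude. The only deviations are minor and in your favor: the paper derives the same inequality via plain nonexpansiveness plus a Young-inequality parameter $\zeta>1$ (its Lemma \ref{prop_sgne}) rather than firm nonexpansiveness, it proves the variance bound through a Burkholder--Davis--Gundy martingale argument (its Lemma \ref{lemma_variance}) where you invoke elementary i.i.d.\ averaging (sufficient under \eqref{variance}), and your closing demiclosedness/Fej\'er step establishing whole-sequence convergence is spelled out more completely than the paper's, which stops at a cluster point with vanishing residual.
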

\begin{proof}
See Appendix \ref{sec_proofs_SGNEPs}.
\end{proof}

\section{Stochastic Nash equilibrium problem}\label{sec_SNEPs}
\subsection{Stochastic Nash equilibrium recap}
In this section we consider a SNEP, that is, a SGNEP with expected value cost functions but without shared constraints.

We consider a set $\mc I=\{1,\dots,N\}$ of noncooperative agents choosing their strategy $x_i\in\RR^{n_i}$ from its local decision set $\Omega_i$ which satisfy Standing Assumption \ref{ass_G}. The local cost function of agent $i$ is defined as in \eqref{eq_cost_stoc}.

Standing Assumptions \ref{ass_G}, \ref{ass_J_det} and \ref{ass_J_exp} hold also in this case. 
The aim of each agent $i$, given the decision variables of the other agents $\bs{x}_{-i}$, is to choose a strategy $x_i$, that solves its local optimization problem, i.e.,
\begin{equation}\label{eq_game_SNE}
\forall i \in \mc I: \quad \min\limits _{x_i \in \Omega_i}  \JJ_i\left(x_i, \bs{x}_{-i}\right).
\end{equation}
As a solution, we aim to compute a stochastic Nash equilibrium (SNE), that is, a collective strategy $\bs x^*\in\bs{\Omega}$ such that for all $i \in \mc I$
$$\JJ_i(x_i^{*}, \boldsymbol x_{-i}^{*}) \leq \inf \{\JJ_i(y, \boldsymbol x_{-i}^{*})\; | \; y \in \Omega_i)\}.$$
We note that, compared to Definition \ref{def_GNE} and Equation \eqref{eq_SGNE}, here we consider only local constraints.

Also in this case, we study the associated stochastic variational inequality (SVI) given by 
\begin{equation}\label{eq_svi_ne}
\langle \FF(\bs x^*),\bs x-\bs x^*\rangle+G(\bs x)-G(\bs x^*)\geq 0\text { for all } \bs x \in \bs{\Omega}
\end{equation}
where $\FF$ is defined in \eqref{eq_grad_stoc} and $G$ as in \eqref{eq_G}. As mentioned in Remark \ref{remark_NE}, being a NE is necessary and sufficient for being a solution of the $\VI(\bs\Omega,\FF,G)$.

The stochastic variational equilibrium (v-SNE) of game (\ref{eq_game_SNE}) is defined as the solution of the $\op{SVI}(\bs\Omega , \FF,G)$ in (\ref{eq_svi}) where $\FF$ is described in (\ref{eq_grad_stoc}). The distributed FB algorithm that we propose is presented in Algorithm \ref{algo_FB_sne_i}. 

\begin{algorithm}
\caption{Distributed Stochastic Forward--Backward}\label{algo_FB_sne_i}
Initialization: $x_i^0 \in \Omega_i$\\
Iteration $k$: Agent $i$ receives $x_j^k$ for all $j \in \mathcal{N}_{i}^{h}$, then updates:
$$x_i^k=\op{prox}_{g_i}[x_i^k-\alpha_{i}\hat F_{i}(x_i^k, \boldsymbol{x}_{-i}^k,\xi_i^k)]$$
\end{algorithm}


\subsection{Convergence for restricted cocoercive pseudogradients}

If the restricted cocoercivity assumption holds for the pseudogradient (Assumption \ref{ass_res_coco}) and there are enough sample available, one can use Algorithm \ref{algo_FB_stoc_i} with $A=0, b=0$ and the SAA scheme as in \eqref{eq_F_SAA}. Moreover in this case, it is possible to use also the stochastic approximation scheme. 

In this case, we approximate $\FF$ with only one realization of the random variable $\xi$, therefore, the approximation $\hat F$ is formally defined as \begin{equation}\label{eq_F_SA}
\begin{aligned}
\hat F(\bs x^k,\bs \xi^k)&=F_\textup{SA}(\bs x^k,\bs \xi^k)\\
&=\op{col}\left(\nabla_{x_1}f_1(\bs x^k,\xi^k_1),\dots,\nabla_{x_N}f_N(\bs x^k,\xi^k_N)\right),
\end{aligned}
\end{equation}
where $\bs\xi^{k} =\op{col}(\xi_1^{k},\dots,\xi_{N}^k)\in\RR^N$ is a collection of i.i.d. random variables drawn from $\PP$.

Before stating the convergence result, we state some further assumptions.
With a little abuse of notation, the approximation error is defined as
$$\epsilon_k=F_\textup{SA}(\bs x^k,\xi_k)-\FF(\bs x^k).$$

We suppose that it satisfies Standing Assumption \ref{ass_error}. Moreover, in the SA scheme, there are assumptions also on the step size sequence. In particular, here we let the sequence of the step sizes to be diminishing. This assumption is standard in literature and it has the role of controlling the stochastic error  \cite{koshal2013,koshal2010}. 
\begin{assumption}[Vanishing step sizes]\label{ass_step}
The step size sequence $(\gamma_k)_{k\in\NN}$ is such that
$$\sum_{k=0}^\infty\gamma_k=\infty, \;\sum_{k=0}^\infty\gamma_k^2<\infty \text{ and }\sum_{k=0}^\infty\gamma_k^2\,\EEk{\normsq{\epsilon_k}}<\infty.\vspace{-.2cm}$$
\fineass\end{assumption}

\begin{assumption}[Bounded step sizes]\label{ass_bound_step}
The step size sequence $\{\gamma_k\}$ is such that $\gamma_k\leq2\beta$ where $\beta$ is the cocoercivity constant of $\FF$ as in Assumption \ref{ass_res_coco}.
\fineass\end{assumption}
We can now state our convergence result.

\begin{theorem}\label{theo_SA_sne_coco}
Let Assumptions \ref{ass_res_coco}, \ref{ass_step}, \ref{ass_bound_step} hold. The sequence $(x_k)_{k\in\NN}$ generated by Algorithm \ref{algo_FB_sne_i}, with approximation $\hat F=F_{\textup{SA}}$ as in \eqref{eq_F_SA}, converges a.s. to a v-SNE of the game in \eqref{eq_game_SNE}.
\end{theorem}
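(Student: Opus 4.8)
The plan is to read Algorithm \ref{algo_FB_sne_i} as the stochastic forward--backward recursion
$$\bs x^{k+1}=\prox_{\gamma_k g}\bigl(\bs x^k-\gamma_k(\FF(\bs x^k)+\epsilon_k)\bigr),$$
where, since $g=\op{col}(g_1,\dots,g_N)$ is separable, $\prox_{\gamma_k g}$ acts blockwise and is firmly nonexpansive. Because there are no coupling constraints, no preconditioning and no dual/auxiliary variables are needed, so this is the plain stochastic proximal--gradient map. By Remark \ref{remark_NE} the v-SNE coincide with $\op{SOL}(\bs\Omega,\FF,G)$, which are exactly the fixed points of $\bs y\mapsto\prox_{\gamma g}(\bs y-\gamma\FF(\bs y))$ for any $\gamma>0$; in particular every solution $\bs x^*$ satisfies $\bs x^*=\prox_{\gamma_k g}(\bs x^*-\gamma_k\FF(\bs x^*))$, and the solution set is nonempty. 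I fix such a $\bs x^*$ and track $V_k=\normsq{\bs x^k-\bs x^*}$.

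First I would expand $V_{k+1}$ using the nonexpansiveness of $\prox_{\gamma_k g}$ and this fixed-point identity, then take $\EEk{\cdot}$. Writing $\FF(\bs x^k)+\epsilon_k-\FF(\bs x^*)=(\FF(\bs x^k)-\FF(\bs x^*))+\epsilon_k$ and using that $\bs x^k,\gamma_k$ are $\mc F_k$-measurable while $\EEk{\epsilon_k}=0$ (Standing Assumption \ref{ass_error}), the two cross terms in $\epsilon_k$ drop, leaving
$$\EEk{V_{k+1}}\le V_k-2\gamma_k\langle\FF(\bs x^k)-\FF(\bs x^*),\bs x^k-\bs x^*\rangle+\gamma_k^2\normsq{\FF(\bs x^k)-\FF(\bs x^*)}+\gamma_k^2\EEk{\normsq{\epsilon_k}}.$$
Applying restricted $\beta$-cocoercivity (Assumption \ref{ass_res_coco}, valid here because $\bs x^*$ is a solution) to the inner product, and the bound $\gamma_k\le 2\beta$ (Assumption \ref{ass_bound_step}), collapses the middle two terms into a single nonpositive contribution:
$$\EEk{V_{k+1}}\le V_k-\gamma_k(2\beta-\gamma_k)\normsq{\FF(\bs x^k)-\FF(\bs x^*)}+\gamma_k^2\EEk{\normsq{\epsilon_k}}.$$

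Next I would invoke the Robbins--Siegmund almost-supermartingale theorem: the displayed ``good'' term is nonnegative, and by the third condition of Assumption \ref{ass_step} the noise term is a.s. summable. Hence $V_k$ converges a.s.\ (so $(\bs x^k)$ is a.s.\ bounded) and $\sum_k\gamma_k(2\beta-\gamma_k)\normsq{\FF(\bs x^k)-\FF(\bs x^*)}<\infty$ a.s. Since $\sum_k\gamma_k^2<\infty$ forces $\gamma_k\to0$, eventually $2\beta-\gamma_k\ge\beta$, so $\sum_k\gamma_k\normsq{\FF(\bs x^k)-\FF(\bs x^*)}<\infty$; combined with $\sum_k\gamma_k=\infty$ this yields $\liminf_k\normsq{\FF(\bs x^k)-\FF(\bs x^*)}=0$ a.s. Using boundedness I extract a subsequence $\bs x^{k_j}\to\bar{\bs x}$ along which, by continuity of $\FF$, $\FF(\bar{\bs x})=\FF(\bs x^*)$. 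I would also record that $\sum_k\gamma_k^2\EEk{\normsq{\epsilon_k}}<\infty$ and $\sum_k\gamma_k^2<\infty$ give $\gamma_k\epsilon_k\to0$ a.s.

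The hard part is certifying that the cluster point $\bar{\bs x}$ actually solves the SVI. Restricted cocoercivity controls only the pseudogradient \emph{values} at solutions, and mere equality $\FF(\bar{\bs x})=\FF(\bs x^*)$ does not by itself imply $0\in\FF(\bar{\bs x})+G(\bar{\bs x})$, so one cannot read off a solution from the $\FF$-residual alone. The route I would try is to sharpen the recursion by replacing nonexpansiveness with the \emph{firm} nonexpansiveness of $\prox_{\gamma_k g}$, which appends a negative term $-\normsq{(\bs x^k-\bs x^{k+1})-\gamma_k(\FF(\bs x^k)+\epsilon_k-\FF(\bs x^*))}$; Robbins--Siegmund then also controls this term, and the aim is to show that, along the extracted subsequence, the genuine forward--backward residual $\bs x^{k_j}-\prox_{\gamma_{k_j} g}(\bs x^{k_j}-\gamma_{k_j}\FF(\bs x^{k_j}))$ tends to zero (exploiting $\gamma_k\epsilon_k\to0$ and continuity), whence passing to the limit in the proximal inclusion gives $\bar{\bs x}=\prox_g(\bar{\bs x}-\FF(\bar{\bs x}))$, i.e.\ $\bar{\bs x}\in\op{SOL}(\bs\Omega,\FF,G)$. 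I expect the delicate point to be exactly handling the non-vanishing stochastic error $\epsilon_k$ inside this extra term (it is not $\mc F_k$-measurable through $\bs x^{k+1}$), which is where the technical heavy lifting lies. Once $\bar{\bs x}$ is known to be a solution, I would close the argument by re-running the quasi-Fejér estimate with $\bs x^*:=\bar{\bs x}$: then $\normsq{\bs x^k-\bar{\bs x}}$ converges a.s.\ and admits a subsequence tending to $0$, so its limit is $0$ and the whole sequence converges a.s.\ to $\bar{\bs x}$, a v-SNE of the game in \eqref{eq_game_SNE}.
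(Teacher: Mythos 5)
Your quasi-Fej\'er recursion is correct, and up to Robbins--Siegmund your derivation is actually cleaner than the paper's (you let the conditional expectation annihilate the noise cross terms, where the paper invokes Young's inequality). But the proof has a genuine gap exactly where you flag it, and that gap is the content of the theorem. By applying cocoercivity immediately, you convert the entire decrease into the term $\gamma_k(2\beta-\gamma_k)\normsq{\FF(\bs x^k)-\FF(\bs x^*)}$, so Robbins--Siegmund can only ever tell you about the pseudogradient gap; all information about $G$ and the proximal step has been discarded, and, as you admit, $\FF(\bar{\bs x})=\FF(\bs x^*)$ does not place $\bar{\bs x}$ in $\op{SOL}(\bs\Omega,\FF,G)$. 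The repair you sketch does not close this: the residual built with the vanishing steps, $\bs x^{k_j}-\prox_{\gamma_{k_j}g}(\bs x^{k_j}-\gamma_{k_j}\FF(\bs x^{k_j}))$, tends to zero trivially as $\gamma_{k_j}\to 0$ for bounded iterates in $\bs\Omega$, so driving it to zero certifies nothing; and passing to the limit in the proximal inclusion would require control of $(\bs x^k-\bs x^{k+1})/\gamma_k$, not merely of $\bs x^k-\bs x^{k+1}$. So the ``technical heavy lifting'' you defer is precisely the missing idea, not a routine verification.

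The paper's proof is organized so that this certificate comes out of Robbins--Siegmund directly. It does \emph{not} use cocoercivity to cancel the $-\normsq{\bs x^k-\bs x^{k+1}}$ term: Young's inequality with parameter $\zeta>1$ is used precisely so that a strictly negative multiple $\left(\frac{1}{\zeta}-1\right)\normsq{\bs x^k-\bs x^{k+1}}$ survives (cocoercivity is spent only on making the coefficient of the monotonicity inner product nonpositive, which is where Assumption \ref{ass_bound_step} enters). That surviving term is then bounded below via $\normsq{\bs x^k-\bs x^{k+1}}\geq\frac{1}{2}\op{res}(\bs x^k)^2-\gamma_k^2\normsq{\epsilon_k}$, where $\op{res}$ is the forward--backward residual, so the supermartingale inequality carries $\op{res}(\bs x^k)^2$ as its decrease term. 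Robbins--Siegmund then gives $\sum_k\op{res}(\bs x^k)^2<\infty$, hence $\op{res}(\bs x^k)\to 0$ a.s., and continuity yields $\op{res}(\bar{\bs x})=0$ at any cluster point, which is exactly the statement that $\bar{\bs x}$ is a v-SNE. (Even there, the step-size subtlety you identified requires care: the residual must be anchored to a nonvanishing step for $\op{res}(\bar{\bs x})=0$ to be meaningful, a point the paper glosses over when it interleaves $\gamma_k$-step projections into the residual estimate.) If you want to complete your argument, the fix is structural: keep a negative multiple of $\normsq{\bs x^k-\bs x^{k+1}}$ in the recursion and convert it into a fixed-step residual before invoking Robbins--Siegmund, rather than trying to recover solution-hood after the fact from the pseudogradient gap.
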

\begin{proof}
See Appendix \ref{sec_proofs_SNE}.
\end{proof}

\subsection{Convergence for restricted strictly monotone mappings}

If Assumption \ref{ass_res_coco} is replaced with restricted strict monotonicity, it is still possible to prove convergence. Therefore, in the remaining part of this section we analyze this case.


\begin{assumption}\label{ass_res_strict}
$\FF$ is restricted strictly monotone at $\bs x^*\in\op{SOL}(\bs\Omega,\FF,G)$.
\fineass\end{assumption}

Under this assumption, if a solution exists \cite[Corollary 2.2.5]{facchinei2007}, then it is unique \cite[Theorem 2.3.3]{facchinei2007}.

\begin{remark}
A cocoercive mapping is not necessarily strictly monotone and, vice versa, a strictly monotone mapping is not necessarily cocoercive.
\fineass\end{remark}

For the sake of proving convergence, we suppose $\FF$ to be restricted Lipschitz continuous. On the other hand, we recall that it is not necessary to know the value of the Lipschitz constant since it does not affect the parameters involved in the algorithm. This is very practical, since in general the Lipschitz constant is not easy to compute. 
\begin{assumption}\label{ass_res_lip}
$\FF$ is restricted Lipschitz continuous at $\bs x^*\in\op{SOL}(\bs\Omega,\FF,G)$.
\fineass\end{assumption}

We can now state the convergence result.
\begin{theorem}\label{theo_SA_sne_strict}
Let Assumptions \ref{ass_step}, \ref{ass_res_strict}, \ref{ass_res_lip} hold. The sequence $(x_k)_{k\in\NN}$ generated by Algorithm \ref{algo_FB_sne_i} with approximation $\hat F=F_{\textup{SA}}$ as in \eqref{eq_F_SA} converges a.s. to a $\mathrm{v}$-$\mathrm{SNE}$ of the game in \eqref{eq_game_SNE}.
\end{theorem}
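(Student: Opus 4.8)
The plan is a stochastic quasi-Fej\'er / Lyapunov argument terminating in the Robbins--Siegmund almost-supermartingale convergence theorem. By Assumption \ref{ass_res_strict} the $\op{SVI}(\bs\Omega,\FF,G)$ has a unique solution $\bs x^*$ \cite[Theorem 2.3.3]{facchinei2007}, and $\bs x^*$ is a fixed point of the forward--backward map $\bs x\mapsto\prox_{g}[\bs x-\gamma_k\FF(\bs x)]$ for every $k$ (equivalently $-\FF(\bs x^*)\in G(\bs x^*)$). Writing the update with diminishing step $\gamma_k$ and $\epsilon_k=F_\textup{SA}(\bs x^k,\xi_k)-\FF(\bs x^k)$, and setting $V_k=\normsq{\bs x^k-\bs x^*}$, I would first use firm nonexpansiveness (hence nonexpansiveness) of $\prox_g$ to obtain the deterministic bound
\begin{equation*}
V_{k+1}\leq\normsq{(\bs x^k-\bs x^*)-\gamma_k\big(\FF(\bs x^k)-\FF(\bs x^*)\big)-\gamma_k\epsilon_k}
\end{equation*}
and then expand the right-hand side into the contributions from $V_k$, the cross term in $\langle\FF(\bs x^k)-\FF(\bs x^*),\bs x^k-\bs x^*\rangle$, and the noise terms.

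Second, I would take the conditional expectation $\EEk{\cdot}$. Since $\bs x^k$ and $\bs x^*$ are $\mc F_k$-measurable and $\EEk{\epsilon_k}=0$ by Standing Assumption \ref{ass_error}, every cross term linear in $\epsilon_k$ vanishes, leaving only $\gamma_k^2\EEk{\normsq{\epsilon_k}}$ from the noise. Bounding $\normsq{\FF(\bs x^k)-\FF(\bs x^*)}\leq\ell^2 V_k$ by the restricted Lipschitz Assumption \ref{ass_res_lip} yields
\begin{equation*}
\EEk{V_{k+1}}\leq(1+\ell^2\gamma_k^2)V_k-2\gamma_k\big\langle\FF(\bs x^k)-\FF(\bs x^*),\bs x^k-\bs x^*\big\rangle+\gamma_k^2\EEk{\normsq{\epsilon_k}}.
\end{equation*}
Restricted strict monotonicity (Assumption \ref{ass_res_strict}) makes the middle term nonpositive, so this is precisely a Robbins--Siegmund inequality with multiplicative term $a_k=\ell^2\gamma_k^2$ and perturbation $c_k=\gamma_k^2\EEk{\normsq{\epsilon_k}}$, both summable by Assumption \ref{ass_step}.

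The almost-supermartingale convergence theorem then gives, almost surely, that $V_k$ converges to a finite limit and that $\sum_k\gamma_k d_k<\infty$, where $d_k:=\big\langle\FF(\bs x^k)-\FF(\bs x^*),\bs x^k-\bs x^*\big\rangle\geq0$. The final and, I expect, most delicate step is to promote this to $\bs x^k\to\bs x^*$: unlike the cocoercive case of Theorem \ref{theo_SA_sne_coco}, strict monotonicity supplies no quantitative descent of $V_k$, so only a subsequential argument is available. Since $\sum_k\gamma_k=\infty$ while $\sum_k\gamma_k d_k<\infty$, necessarily $\liminf_k d_k=0$; compactness of $\bs\Omega$ (Standing Assumption \ref{ass_G}) then extracts a subsequence $\bs x^{k_j}\to\bar{\bs x}$ with $d_{k_j}\to0$, and continuity of $\FF$ gives $\big\langle\FF(\bar{\bs x})-\FF(\bs x^*),\bar{\bs x}-\bs x^*\big\rangle=0$, whence $\bar{\bs x}=\bs x^*$ by strict monotonicity. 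Thus a subsequence of $V_k$ tends to $0$; since $V_k$ converges almost surely, its limit must be $0$, i.e. $\bs x^k\to\bs x^*$ almost surely, and $\bs x^*$ is the unique v-SNE.
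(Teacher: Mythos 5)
Your proposal is correct and is essentially the paper's own argument: the paper proves Theorem \ref{theo_SA_sne_strict} by a one-line citation of \cite[Proposition 1]{koshal2010} ``with the projection replaced by the proximal operator,'' and your chain---nonexpansiveness of $\prox_g$, vanishing of the noise cross-terms under $\EEk{\cdot}$ via Standing Assumption \ref{ass_error}, Robbins--Siegmund (Lemma \ref{lemma_RS}) with multiplicative term $\ell^2\gamma_k^2$ and perturbation $\gamma_k^2\EEk{\normsq{\epsilon_k}}$ summable by Assumption \ref{ass_step}, followed by the $\sum_k\gamma_k=\infty$ versus $\sum_k\gamma_k\langle\FF(\bs x^k)-\FF(\bs x^*),\bs x^k-\bs x^*\rangle<\infty$ subsequential argument using compactness and restricted strict monotonicity---is exactly the content of that cited proof. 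The only caveat is that your final step invokes continuity of $\FF$ at the cluster point, which Assumption \ref{ass_res_lip} (Lipschitz continuity restricted to $\bs x^*$) does not literally provide, but this is available from the Standing Assumptions (continuously differentiable integrands with integrable Lipschitz constants) and is likewise assumed in the cited reference, so your write-up matches the paper's intended argument and level of rigor.
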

\begin{proof}
It follows as the proof of \cite[Proposition 1]{koshal2010}, by substituting the projection with the proximal operator.
\end{proof}

\begin{corollary}
If Assumptions \ref{ass_step}, \ref{ass_res_strict}, \ref{ass_res_lip} hold. The sequence generated by Algorithm \ref{algo_FB_sne_i} with approximation $\hat F=F_{\textup{SAA}}$ as in \eqref{eq_F_SAA} converges a.s. to a v-SNE of the game in \eqref{eq_game_SNE}.\fineass
\end{corollary}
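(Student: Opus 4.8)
The plan is to observe that this Corollary differs from Theorem~\ref{theo_SA_sne_strict} only in the approximation scheme, $F_{\textup{SAA}}$ in place of $F_{\textup{SA}}$, while the list of invoked hypotheses (Assumptions~\ref{ass_step}, \ref{ass_res_strict} and \ref{ass_res_lip}) is identical. Since Theorem~\ref{theo_SA_sne_strict} is proved by following \cite[Proposition~1]{koshal2010} (with the projection replaced by the proximal operator), I would first isolate exactly which properties of the stochastic error $\epsilon_k=\hat F(\bs x^k,\bs\xi^k)-\FF(\bs x^k)$ that argument actually uses. These are: (i) the conditional zero-mean property $\EEk{\epsilon_k}=0$ of Standing Assumption~\ref{ass_error}, which makes $(\epsilon_k)$ a martingale-difference sequence and lets the cross term vanish in conditional expectation; and (ii) the step-size and variance summability conditions of Assumption~\ref{ass_step}, which feed a Robbins--Siegmund type supermartingale argument so that the almost sure convergence of the Fej\'er-type distance $\norm{\bs x^k-\bs x^*}$ can be extracted.

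The second step is to verify that the SAA error enjoys exactly these same two properties. The zero-mean property is assumed directly through Standing Assumption~\ref{ass_error}; equivalently, since $F_{\textup{SAA}}(\bs x^k,\bs\xi^k)$ is an empirical average of conditionally i.i.d.\ unbiased samples of $\FF(\bs x^k)$, its conditional expectation equals $\FF(\bs x^k)$, so $\EEk{\epsilon_k}=0$. The summability conditions in Assumption~\ref{ass_step} are hypotheses of the Corollary and therefore hold by assumption; moreover, averaging over the batch of $S_k$ samples contracts the conditional variance, $\EEk{\normsq{\epsilon_k}}\le \sigma^2/S_k$ under the uniform bounded variance \eqref{variance}, so the SAA error is in fact controlled \emph{more} tightly than the single-sample SA error, making the third requirement of Assumption~\ref{ass_step} easier, not harder, to meet.

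With (i) and (ii) in hand, the third step is simply to rerun the argument of Theorem~\ref{theo_SA_sne_strict} verbatim, substituting $F_{\textup{SAA}}$ for $F_{\textup{SA}}$ throughout: the abstract recursion driving the proof depends on the approximation only through $\epsilon_k$, and that dependence is entirely captured by (i)--(ii). This yields that $(x_k)$ generated by Algorithm~\ref{algo_FB_sne_i} converges almost surely, and by restricted strict monotonicity (Assumption~\ref{ass_res_strict}) the limit is the unique v-SNE of the game in \eqref{eq_game_SNE}. The main, and essentially only, point requiring care is the decoupling carried out in the first step: one must confirm that \cite[Proposition~1]{koshal2010} never exploits the single-realization structure of $F_{\textup{SA}}$ beyond the two error properties above. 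Once that is checked, the substitution is immediate and no new estimate is needed, so there is no genuine analytic obstacle here—the Corollary is an observation that the convergence argument is agnostic to the choice between the SA and SAA approximations.
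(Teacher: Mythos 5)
Your proposal is correct and takes essentially the same approach as the paper, which states this corollary without a separate proof precisely because, as you argue, the proof of Theorem \ref{theo_SA_sne_strict} (via \cite[Proposition 1]{koshal2010}, with the proximal operator replacing the projection) depends on the approximation scheme only through the conditional zero-mean property of $\epsilon_k$ and the summability conditions of Assumption \ref{ass_step}, both of which the SAA estimator satisfies. Your observation that the SAA error is controlled even more tightly (conditional variance of order $1/S_k$) matches the paper's subsequent remark that, with vanishing step sizes, the batch size need not even be increasing.
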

\begin{remark}
Since we take a vanishing step size (Assumption \ref{ass_step}), in the case of Corollary 1, the batch size sequence should not be increasing, that is, we can take a constant number of realizations at every iteration.
\fineass\end{remark}

\section{An open problem}\label{sec_disc}

We now discuss the SGNEP using the stochastic approximation (SA) scheme. The idea is to approximate $\FF$ with only one realization of the random variable $\xi$ as in \eqref{eq_F_SA}. To counterbalance that, it turns that we have to make some further assumptions on $\FF$ and to take a vanishing step size in order to obtain convergence.



In the case with SAA scheme (Section \ref{sec_conv}), one can rely on the huge number of samples to obtain a good approximation of the expected value cost function. When using SA instead, another way to control the error should be found. This is why, usually, we choose a vanishing step size sequence, as in Assumption \ref{ass_step} \cite{koshal2013,jiang2008,kannan2014}. 

Having time varying step size sequence formally translates into having, as a preconditioning matrix,
\begin{equation}\label{eq_phi_k}
\Phi_k=\left[\begin{array}{ccc}
\alpha_k^{-1} & 0 & -{\bf{A}}^\top\\
0 & \nu^{-1} & -{\bf{L}}\\
-{\bf{A}} & -{\bf{L}} & \sigma^{-1}
\end{array}\right],
\end{equation} 
that changes at every iteration.
Therefore, a variable metric, induced by $\Phi_k$ should be used for the convergence analysis. Such analysis is possible considering the results in \cite{yi2019} and \cite{combettes2014, cui2019}. In particular, the FB algorithm in this case reads as
\begin{equation}\label{fixed_point_k}
\bs\omega^{k+1}=(\mathrm{Id}+\Phi_k^{-1} \bar{\mathcal{B}})^{-1} \circ(\mathrm{Id}-\Phi_k^{-1} \hat{\mc A})(\bs\omega^k),
\end{equation}
while the sequence $\{\Phi_k\}$ should satisfy
\begin{equation}\label{eq_metric}
\sup _{k \in \mathbb{N}}\|\Phi_k^{-1}\|<\infty \text { and } \forall k >0 \,\,\,(1+\eta_{k}) \Phi_{k+1}^{-1} \succcurlyeq \Phi_k^{-1}.
\end{equation}
Convergence of \eqref{fixed_point_k} in the deterministic case can be retrieved from \cite[Theorem 4.1]{combettes2014}.

Even if $\alpha_k$ is the only step which should be diminishing, (as in Remark \ref{remark_error}, we have the error only for the primal variable), the variable metric induced with vanishing steps does not satisfy condition \eqref{eq_metric}. Indeed, if Assumption \ref{ass_step} holds, $\norm{\Phi^{-1}_{k+1}}\leq\norm{\Phi^{-1}_{k}}$ and this contradicts condition \eqref{eq_metric}. A possible interpretation of why this condition is not satisfied is, loosely speaking, the following. Even if numerically it is reasonable to have a vanishing step, theoretically, it is not possible to prove if the algorithm converges to a zero of the mapping or to a zero of the steps. However, our numerical experience suggests that the distributed stochastic preconditioned FB algorithm may converge to a SGNE also with the SA scheme (although very slowly).
In light of this considerations, we state the following conjecture.
\begin{conjecture}\label{theo_SA_sgne}
Let Assumptions \ref{ass_res_coco}, \ref{ass_bound}, \ref{ass_step} hold. Then, the sequence generated by Algorithm \ref{algo_FB_stoc_i} with the approximation given in \eqref{eq_F_SA} converges to a v-SGNE of the game in \eqref{eq_game_stoc}.\fineass
\end{conjecture}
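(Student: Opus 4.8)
The plan is to treat the recursion \eqref{fixed_point_k} not as a deterministic variable-metric forward--backward iteration, but as a stochastic approximation scheme amenable to a Robbins--Siegmund supermartingale argument. The natural first attempt is to invoke the variable-metric forward--backward convergence theorem of \cite{combettes2014} (or its stochastic refinement in \cite{cui2019}); but this is precisely the route excluded above, since the sequence $\{\Phi_k\}$ generated by a vanishing primal step violates the monotonicity requirement \eqref{eq_metric}. I would therefore abandon the moving metric $\norm{\cdot}_{\Phi_k}$ as the Lyapunov function and instead measure progress in a fixed reference metric $\bar\Phi\succ0$ --- for instance the matrix \eqref{eq_phi_k} frozen at an admissible constant $\bar\alpha$, with $\nu,\sigma$ as in Assumption \ref{ass_bound}.

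First I would isolate the stochastic perturbation. By Remark \ref{remark_error} the error enters only the primal block, $\varepsilon_k=\op{col}(\epsilon_k,0,0)$, so the forward step is $(\op{Id}-\Phi_k^{-1}\bar{\mc A})(\bs\omega^k)-\Phi_k^{-1}\varepsilon_k$. Expanding the squared $\bar\Phi$-distance from $\bs\omega^{k+1}$ to a solution $\bs\omega^*\in\op{zer}(\bar{\mc A}+\bar{\mc B})$ (nonempty by Lemma \ref{lemma_zero}(ii)), Assumption \ref{ass_error} ($\EEk{\epsilon_k}=0$) annihilates the conditional mean of the cross term, while $\sum_k\gamma_k^2\,\EEk{\normsq{\epsilon_k}}<\infty$ from Assumption \ref{ass_step} --- the vanishing step $\gamma_k=\alpha_k$ sitting inside $\Phi_k^{-1}$ --- bounds the second-order error term by a summable sequence.

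Next, using the restricted cocoercivity of $\bar{\mc A}$ (Lemma \ref{lemma_op}(i)), the maximal monotonicity of $\bar{\mc B}$ (Lemma \ref{lemma_op}(ii)), and the firm nonexpansiveness of the resolvent, I would aim to derive a stochastic quasi-Fej\'er inequality
\begin{equation*}
\EEk{\norm{\bs\omega^{k+1}-\bs\omega^*}_{\bar\Phi}^2}\leq(1+a_k)\norm{\bs\omega^k-\bs\omega^*}_{\bar\Phi}^2-b_k+c_k,
\end{equation*}
with $a_k,c_k\geq0$ summable --- absorbing both the error variance and the discrepancy between $\norm{\cdot}_{\Phi_k}$ and $\norm{\cdot}_{\bar\Phi}$ --- and $b_k\geq0$ a progress term scaling with $\alpha_k$. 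Robbins--Siegmund then delivers a.s.\ convergence of $\norm{\bs\omega^k-\bs\omega^*}_{\bar\Phi}$ and $\sum_k b_k<\infty$; since $\sum_k\alpha_k=\infty$, this forces a subsequence of $\{\bs\omega^k\}$ to approach $\op{zer}(\bar{\mc A}+\bar{\mc B})$, so every cluster point of that subsequence lies in $\op{zer}(\bar{\mc A}+\bar{\mc B})$ and, by Lemma \ref{lemma_zero}(i), is a v-SGNE of \eqref{eq_game_stoc}. Convergence of the distance then upgrades this to convergence of the whole sequence.

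The hard part --- and the reason this remains a conjecture --- is making the inequality above honest in the \emph{fixed} metric. Because $\alpha_k$ is buried inside $\Phi_k$ rather than multiplying $\bar{\mc A}$ alone, the customary decoupling of ``step size'' from ``operator'' breaks down: the backward resolvent $(\op{Id}+\Phi_k^{-1}\bar{\mc B})^{-1}$ is itself evaluated in the shrinking metric, so both the nonexpansiveness estimate and the cocoercive descent are distorted by the mismatch between $\bar\Phi$ and $\Phi_k$. Quantifying that mismatch so that $a_k,c_k$ stay summable \emph{while} the progress term $b_k$ still dominates the error is exactly the difficulty signalled by the failure of \eqref{eq_metric}: without an independent guarantee that the iterates do not drift with the vanishing steps --- the ``zero of the map versus zero of the steps'' ambiguity noted above --- the supermartingale argument cannot be closed, which is why we can only conjecture the result.
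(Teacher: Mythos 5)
The statement you attempted is stated in the paper as a \emph{conjecture}: the paper itself offers no proof, only a discussion (Section \ref{sec_disc}) of why the obvious routes fail. Your proposal does not close the argument either, and says so explicitly, so the relevant question is whether your diagnosis of the obstruction agrees with the paper's --- and it does. The paper observes that with a vanishing primal step $\alpha_k$ the preconditioning matrix $\Phi_k$ induces a variable metric that violates the monotonicity condition \eqref{eq_metric} required by the variable-metric forward--backward theory of \cite{combettes2014}, and that the alternative hypothesis of that theory, summable perturbations, is unavailable under the SA scheme because the stochastic error does not vanish over iterations (in contrast with SAA, where Lemma \ref{lemma_variance} gives an error variance of order $1/S_k$). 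Your sketch reaches the same wall from a slightly different direction: you freeze the Lyapunov metric at a fixed $\bar\Phi$ and try to push the $\Phi_k$-versus-$\bar\Phi$ mismatch into the summable sequences of a Robbins--Siegmund inequality; but since $\alpha_k^{-1}\to\infty$, the primal block of that mismatch grows rather than vanishes, so it cannot be made summable, which is precisely the ``zero of the mapping versus zero of the steps'' ambiguity the paper invokes.

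The one genuine addition in your proposal --- the fixed reference metric with stochastic quasi-Fej\'er bookkeeping --- is a sensible programme and is more concrete than the paper's discussion, but, as you correctly concede, the step size is buried inside both the forward step and the resolvent $(\op{Id}+\Phi_k^{-1}\bar{\mc B})^{-1}$, so the decoupling of step size from operator that makes the non-generalized proofs work (Theorems \ref{theo_SA_sne_coco} and \ref{theo_SA_sne_strict}, where $\gamma_k$ multiplies the pseudogradient directly and the resolvent is step-independent) is unavailable here. In short, your attempt is faithful to the paper's position: both stop at the same obstruction, and neither constitutes a proof --- which is exactly why the statement is a conjecture.
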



In \cite{combettes2014}, the authors propose a deterministic FB algorithm with error and they make the assumption of having a summable sequence of noises. This assumption is not reasonable in the SA approach as in general the approximation error does not vanish with time (this indeed holds in the SAA case, where the second moment of stochastic error vanishes with $1/S_k$, see Proposition \ref{lemma_variance}). On the other hand,  if one has a vanishing stochastic error, the convergence analysis can be done similarly to the SAA case.

\section{Comparative numerical simulations}\label{sec_sim}
Let us propose a number of numerical simulations to corroborate the theoretical analysis where we also compare our algorithm with the forward-backward-forward (FBF) and the extragradient (EG) algorithms.
 
\subsection{Distributed forward-backward-forward and extragradient}
In this section, we describe the distributed FBF scheme, presented in Algorithm \ref{algo_FBF_i} \cite{franci2019fbf} and the distributed EG scheme, in Algorithm \ref{algo_EG_i}.

\begin{algorithm}
\caption{Stochastic Distributed Forward Backward Forward}\label{algo_FBF_i}
Initialization: $x_i^0 \in \Omega_i, \lambda_i^0 \in \RR_{\geq0}^{m},$ and $z_i^0 \in \RR^{m} .$\\
Iteration $k$: Agent $i$\\
($1$) Receives $x_j^k$ for $j \in \mathcal{N}_{i}^{J}$, $ \lambda_j^k$ and $z_{j,k}$ for $j \in \mathcal{N}_{i}^{\lambda}$ then updates
$$\begin{aligned}
&\tilde x_i^k=\op{prox}_{g_i}[x_i^k-\alpha_i(\hat F_{i}(x_i^k,\boldsymbol x_{-i}^k,\xi_i^k)+A_{i}^{\top} \lambda_i^k)]\\
&\tilde z_i^k=z_i^k-\nu_i \sum_{j \in \mathcal{N}_{i}^{\lambda}} w_{i,j}(\lambda_i^k-\lambda_j^k)\\
&\tilde\lambda_i^k=\op{proj}_{\RR^m_{\geq 0}}\{\lambda_i^k+\sigma_i(A_{i}x_i^k-b_{i})\\
&\quad\quad+\sigma_i\sum_{j \in \mathcal{N}_{i}^{\lambda}} w_{i,j}[(z_{i}^{k}-z_j^k)-(\lambda_i^k-\lambda_j^k)]\}
\end{aligned}$$
($2$) Receives $\tilde x_j^k$ for $j \in \mathcal{N}_{i}^{J}$, $ \tilde \lambda_j^k$and $\tilde z_{j,k}$ for $j \in \mathcal{N}_{i}^{\lambda}$ then updates
$$\begin{aligned}
&x_i^{k+1}=\tilde x_i^k-\alpha_i(\hat F_{i}(x_i^k,\boldsymbol x_{-i}^k,\xi_i^k)-\hat F_{i}(\tilde x_i^k,\tilde{\boldsymbol x}_{-i}^k,\eta_i^k))\\
&\quad\quad\quad-\rho_iA_{i}^{\top} (\lambda_i^k-\tilde \lambda_{i,k})\\
&z_i^{k+1}=\tilde z_i^k+\nu_i \sum_{j \in \mathcal{N}_{i}^{\lambda}} w_{i,j}[(\lambda_i^k-\lambda_j^k)-(\tilde\lambda_i^k-\tilde\lambda_j^k)]\\
&\lambda_i^{k+1}=\tilde{\lambda}_i^{k}+\sigma_iA_i(\tilde{x}_{i}^{k}-x_{i}^{k})\\
&\quad\quad\quad-\sigma_i\sum_{j \in \mathcal{N}_{i}^{\lambda}} w_{i,j}[(z_i^k-z_j^k)-(\tilde z_i^k-\tilde z_j^k)]\\
&\quad\quad\quad+\sigma_i\sum_{j \in \mathcal{N}_{i}^{\lambda}} w_{i,j}[(\lambda_{i,k}-\lambda_j^k)-(\tilde\lambda_i^k-\tilde\lambda_j^k)]\\
\end{aligned}$$
\end{algorithm}

To obtain the FBF and EG algorithms, let us rewrite the operator $\bar{\mc B}$ in \eqref{eq_expanded_stoc} as $\bar{\mc B}=\mc C+\mc D$, where $\mc C$ contains the local constraints and $\mc D$ is the skew symmetric matrix. Let us also call $\mc H=\bar{\mc A}+\mc D$.
Then, in compact form, the FBF algorithm generates two sequences $(\bs u^{k},\bs v^{k})_{k\geq 0}$ as follows: 
\begin{equation}\label{FBF}
\begin{aligned}
\bs u^{k}&=\op{J}_{\Psi^{-1} \mc C}(\bs v^{k}-\Psi^{-1} \mc H \bs v^{k})\\
\bs v^{k+1}&=\bs u^{k}+\Psi^{-1} (\mc H\bs v^{k}-\mc H\bs u^{k}).
\end{aligned}
\end{equation}

In \eqref{FBF}, $\Psi$ is a block-diagonal matrix that contains the step sizes: 
\begin{equation}\label{Psi}
\Psi=\op{diag}(\alpha^{-1},\nu^{-1}, \sigma^{-1}),
\end{equation}
where $\alpha$, $\nu$ and $\sigma$ are diagonal matrices of suitable dimensions.
Convergence of the stochastic FBF with the SAA scheme is guaranteed by \cite[Theorem 4.5]{staudigl2019}.

Analogously, we can write the stochastic distributed EG algorithm in compact form as: 
\begin{equation}\label{EG}
\begin{aligned}
\bs u^{k}&=\op{J}_{\Psi^{-1} \mc C}(\bs v^{k}-\Psi^{-1} \mc H \bs v^{k})\\
\bs v^{k+1}&=\op{J}_{\Psi^{-1} \mc C}(\bs v^{k}-\Psi^{-1} \mc H \bs u^{k}).
\end{aligned}
\end{equation}
In this case, convergence is guaranteed by \cite[Theorem 3.18]{iusem2017} with the SAA scheme.
 
\begin{algorithm}
\caption{Distributed Extragradient}\label{algo_EG_i}
Initialization: $x_i^0 \in \Omega_i, \lambda_i^0 \in \RR_{\geq0}^{m},$ and $z_i^0 \in \RR^{m} .$\\
Iteration $k$: Agent $i$\\
($1$) Receives $x_j^k$ for $j \in \mathcal{N}_{i}^{J}$, $ \lambda_j^k$ and $z_{j,k}$ for $j \in \mathcal{N}_{i}^{\lambda}$ then updates
$$\begin{aligned}
&\tilde x_i^k=\op{prox}_{g_i}[x_i^k-\alpha_i(\hat F_{i}(x_i^k,\boldsymbol x_{-i}^k,\xi_i^k)+A_{i}^{\top} \lambda_i^k)]\\
&\tilde z_i^k=z_i^k-\nu_i \sum_{j \in \mathcal{N}_{i}^{\lambda}} w_{i,j}(\lambda_i^k-\lambda_j^k)\\
&\tilde\lambda_i^k=\op{proj}_{\RR^m_{\geq 0}}\{\lambda_i^k+\sigma_i(A_{i}x_i^k-b_{i})\\
&\quad\quad+\sigma_i\sum_{j \in \mathcal{N}_{i}^{\lambda}} w_{i,j}[(z_{i}^{k}-z_j^k)-(\lambda_i^k-\lambda_j^k)]\}
\end{aligned}$$
($2$) Receives $\tilde x_j^k$ for $j \in \mathcal{N}_{i}^{J}$, $ \tilde \lambda_j^k$and $\tilde z_{j,k}$ for $j \in \mathcal{N}_{i}^{\lambda}$ then updates
$$\begin{aligned}
&x_i^{k+1}=\op{prox}_{g_i}[x_i^k-\alpha_i(\hat F_{i}(\tilde x_i^k,\tilde{\boldsymbol x}_{-i}^k,\xi^k_i)+A_{i}^{\top} \tilde\lambda_i^k)]\\
&z_i^{k+1}=z_i^k-\nu_i \sum_{j \in \mathcal{N}_{i}^{\tilde\lambda}} w_{i,j}(\tilde \lambda_i^k-\tilde \lambda_j^k)\\
&\lambda_i^{k+1}=\op{proj}_{\RR^m_{\geq 0}}\{\lambda_i^k+\sigma_i(A_{i}\tilde x_i^k-b_{i})\\
&\quad\quad+\sigma_i\sum_{j \in \mathcal{N}_{i}^{\tilde \lambda}} w_{i,j}[(\tilde z_{i}^{k}-\tilde z_j^k)-(\tilde \lambda_i^k-\tilde \lambda_j^k)]\}
\end{aligned}$$
\end{algorithm}

\subsection{Case study: Network Cournot game}
Now, we consider the electricity market problem as proposed in \cite{xu2013}. Such problem can also be casted as a network Cournot game with markets capacity constraints \cite{yi2019,yu2017}.

We consider a set of $N$ generators (companies) that sell energy in a set of $m$ locations (markets). The random variable $\xi$ represent the demand uncertainty. Each generator decides the quantity $x_i$ of energy to deliver in the $n_i$ markets it is connected with. Each company has a local cost function $c_i(x_i)$ related to the production of electricity. The cost function is not uncertain as we suppose that the companies are able to compute their own cost of production. \\
Each market has a bounded capacity $b_j$ therefore the collective constraints are given by $A\boldsymbol x\leq b$ where $A=[A_1,\dots,A_N]$. Each $A_i$ specifies in which market a company $i$ participates. The prices of the locations are collected in $P:\RR^m\times \Xi\to\RR^m$. The uncertainty variable appears in this functional which is related to the demand. $P$ is supposed to be a linear function. The cost function of each agent is then given by
$$\JJ_i(x_i,x_{-i},\xi)=c_i(x_i)-\EE[P(\xi)^\top(A\boldsymbol x)A_ix_i].$$
Clearly, if $c_i(x_i)$ is strongly convex with Lipschitz continuous gradient and the prices are linear, the pseudo gradient of $\JJ_i$ is strongly monotone.
\subsection{Simulations}\label{sec_sim}

As a numerical setting, we consider a set of 20 companies and 7 markets, similarly to \cite{yi2019,yu2017}. Each company $i$ has a local constraint of the form $0 < x_i < \gamma_i$ where each component of $\gamma_i$ is randomly drawn from $[1, 1.5]$. In terms of electricity market, this can be seen as the capacity limit of generator $i$. Each market $j$ has a maximal capacity $b_j$ randomly drawn from $[0.5, 1]$. The local cost function of the generator $i$ is $c_i(x_i) = \pi_i\sum_{j=1}^{n_i} ([x_i]_j)^2 + q_i^\top x_i$, where $[x_i]_j$ indicates the $j$ component of $x_i$.
$\pi_i$ is randomly drawn from $[1, 8]$, and each component of $q_i$ is randomly drawn from $[0.1, 0.6]$. Notice that $c_i(x_i)$ is strongly convex with Lipschitz continuous gradient. Recall that the cost function of agent $i$ is influenced by the variables of the companies selling in the same market. This information can be retrieved from the graph in \cite[Fig. 1]{yi2019}.

The price $P(\xi) = \bar P-D(\xi)A\boldsymbol x$ is taken as a linear function and each component of $\bar P =\op{col}(\bar P_1,\dots,\bar P_7)$ is randomly drawn from $[2,4]$. The uncertainty appears in the quantities $D(\xi)=\op{diag}\{d_1(\xi_1),\dots, d_7(\xi_7)\}$ that concern the total supply for each market. The entries of $D(\xi)$ are taken with a normal distribution with mean $0.8$ and finite variance. Following \cite{yi2019}, we suppose that the dual variables graph is a cycle graph with the addiction of the edges $(2,15)$ and $(6,13)$. 

We simulate Algorithm \ref{algo_FB_stoc_i}, \ref{algo_FBF_i} and \ref{algo_EG_i} to make a comparison using the SAA scheme. The parameters $\alpha$, $\nu$ and $\sigma$ are taken to be the highest possible to obtain convergence. The plots in Fig. \ref{plot_sol} and \ref{plot_lambda} show respectively $\frac{\norm{\boldsymbol x_{k+1}-\boldsymbol x^{*}}}{\norm{\boldsymbol x^{*}}}$ and $\norm{L \otimes \op{I}_{7} \boldsymbol \lambda_{k}}$. The first performance index indicates the convergence to a solution $\bs x^*$, while the second shows convergence of the dual variables to consensus. The plot in Fig. \ref{plot_time} shows the computational time needed to reach a solution. As one can see from Fig. \ref{plot_sol} and \ref{plot_lambda}, our algorithm is comparable, in terms of number of iterations, to the FBF algorithm but it is computationally the least expensive (Fig. \ref{plot_time}).

Concerning the SA scheme, our numerical experience suggests that a solution may be reached but very slowly. Indeed, taking the same number of iterations as in Fig. \ref{plot_sol}, we obtain a relative distance from the solution of order $10^{-1}$ (higher that the SAA scheme). The advantage of the SA scheme is though that it is computationally not expensive.
 
\begin{figure}[h]
\centering
\includegraphics[width=.45\textwidth]{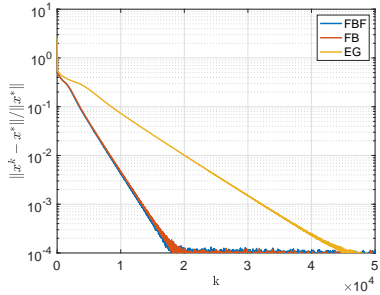}
\caption{Relative distance of the primal variable from the solution.}\label{plot_sol}
\end{figure}

\begin{figure}[h]
\centering
\includegraphics[width=.45\textwidth]{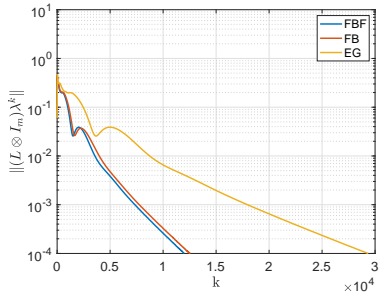}
\caption{Disagreement between the dual variables.}\label{plot_lambda}
\end{figure}

\begin{figure}[h]
\centering
\includegraphics[width=.45\textwidth]{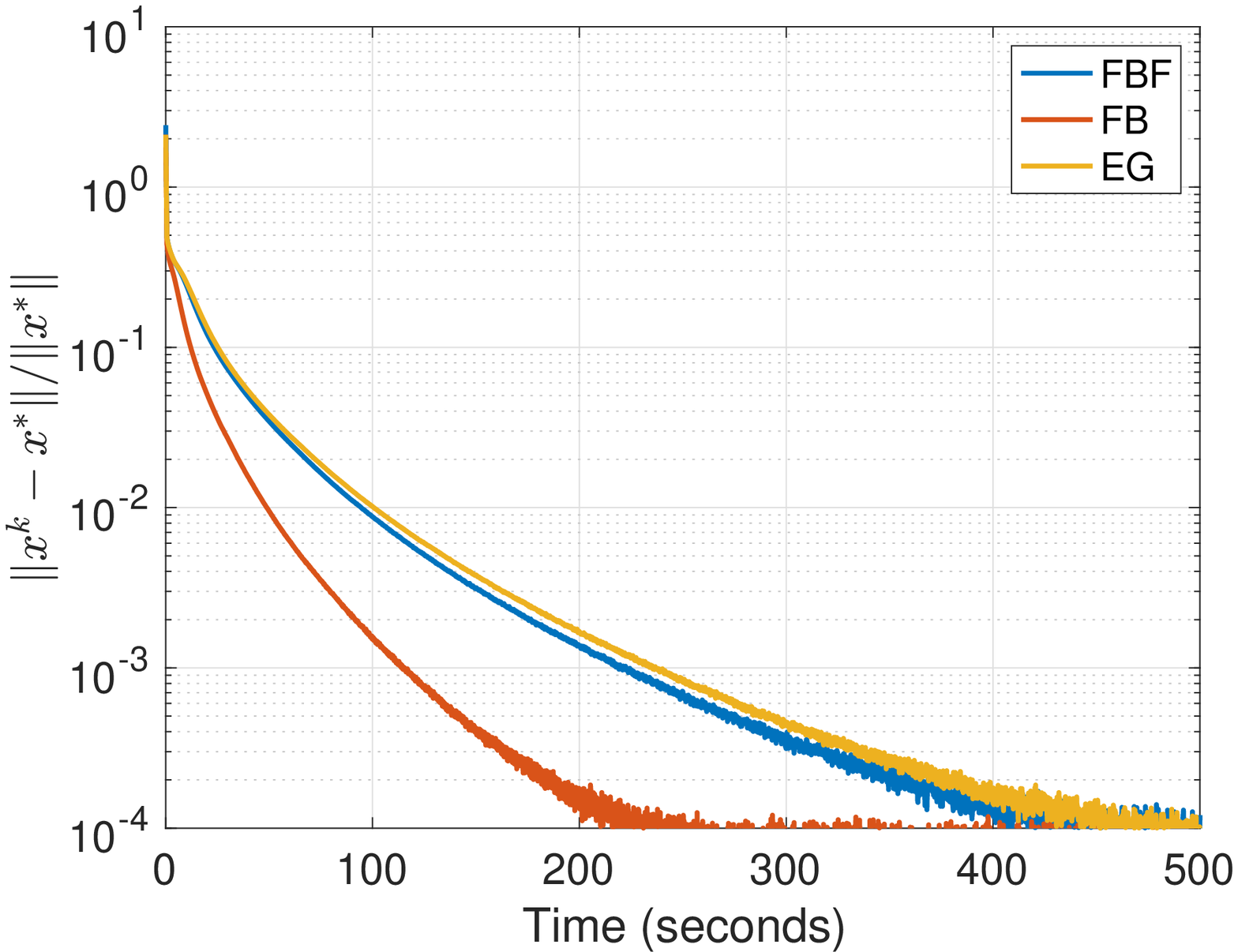}
\caption{Computation time (performed on Matlab R2019a with a 2,3 GHz Intel Core i5 and 8 GB LPDDR3 RAM).}\label{plot_time}
\end{figure}
 
\section{Conclusion}\label{sec_conclu}
The preconditioned forward--backward operator splitting is applicable to stochastic (generalized) Nash equilibrium problems to design distributed equilibrium seeking algorithms. Since the exact expected value is hard to compute in general, stochastic approximation or sample average approximation techniques can be used to ensure convergence almost surely.



Convergence holds under restricted cocoercivity of the pseudo gradient for stochastic generalized Nash equilibrium problems and under restricted strict monotonicity for stochastic Nash equilibrium problems. These assumption are among the weakest known in literature and, remarkably, match well with the deterministic setup.

\section{Appendix A\\ Proofs of Section \ref{sec_algo_SGNEP}}\label{proofs_op}

\begin{proof}[Proof of Lemma \ref{lemma_zero}]
The proof of (i) follows similarly to \cite[Theorem 2]{yi2019}. \\
Concerning (ii), given Standing Assumpitons \ref{ass_G}--\ref{ass_X}, the game in \eqref{eq_game_stoc} has at least one solution $\bs x^*$, therefore, there exists a $\bs\lambda^*\in\RR^m_{\geq0}$ such that the KKT conditions in \eqref{VI_KKT} are satisfied \cite[Theorem 3.1]{auslender2000}. It follows that $\op{zer}(\mc A+\mc B)\neq\emptyset$. The existence of $\bs z^*$ such that $\op{col}(\bs x^*,\bs z^*,\bs\lambda^*)\in\op{zer}(\bar{\mc A}+\bar{\mc B})$ follows from properties of the normal cone and of the Laplacian matrix as a consequence of Assumption \ref{ass_graph} \cite[Theorem 2]{yi2019}.
\end{proof}

\begin{proof}[Proof of Lemma \ref{lemma_op}]
First we notice that $\norm{L}\geq 2d^*$ and that by the Baillon-Haddard Theorem the Laplacian operator is $\frac{1}{2d^*}$-cocoercive. Then Statement (i) follows by this computation:
$$\begin{aligned}
\langle\mc A&(\bs\omega_1)-\mc A(\bs\omega_2),\bs\omega_1-\bs\omega_2\rangle\\
&=\langle \FF(\bs x_1)-\FF(\bs x_2),x_1-x_2\rangle+\langle \bs L\bs \lambda_1-\bs L\bs \lambda_2,\bs \lambda_1-\bs \lambda_2\rangle\\
&\geq\beta\normsq{\FF(\bs x_1)-\FF(\bs x_2)}+\frac{1}{2d^*}\normsq{\bs L\bs \lambda_1-\bs L\bs \lambda_2}\\
&\geq\min\left\{\beta,\frac{1}{2d^*}\right\}\left(\normsq{\FF(\bs x_1)-\FF(\bs x_2)}+\normsq{\bs L\bs \lambda_1-\bs L\bs \lambda_2}\right)\\
&\geq\theta\normsq{\mc A(\bs\omega_1)-\mc A(\bs\omega_2)}.
\end{aligned}$$
The operator $\bar{\mc B}$ is given by a sum, therefore it is maximally monotone if both the addend are \cite[Proposition 20.23]{bau2011}. The first part is maximally monotone by Standing Assumption \ref{ass_G} and Moreau Theorem \cite[Theorem 20.25]{bau2011} and the second part is a skew symmetric matrix \cite[Corollary 20.28]{bau2011}.
Statement (iii) follows from Statement (i) and (iv) follows from (ii) \cite[Lemma 7]{yi2019}.
\end{proof}

\section{Appendix B\\Sequence of random variables}
In this appendix we recall some results on sequences of random variables, given the probability space $(\Xi, \mc F, \PP)$.

Let us define the filtration $\mc F=\{\mc F_k\}$, that is, a family of $\sigma$-algebras such that $\mathcal{F}_{0} = \sigma\left(X_{0}\right)$, for all $k \geq 1$, 
$\mathcal{F}_{k} = \sigma\left(X_{0}, \xi_{1}, \xi_{2}, \ldots, \xi_{k}\right)$
and $\mc F_k\subseteq\mc F_{k+1}$ for all $k\geq0$.

The Robbins-Siegmund Lemma is widely used in literature to prove a.s. convergence of sequences of random variables. It first appeared in \cite{RS1971}. 
\begin{lemma}[Robbins-Siegmund Lemma, \cite{RS1971}]\label{lemma_RS}
Let $\mc F=(\mc F_k)_{k\in\NN}$ be a filtration. Let $\{\alpha_k\}_{k\in\NN}$, $\{\theta_k\}_{k\in\NN}$, $\{\eta_k\}_{k\in\NN}$ and $\{\chi_k\}_{k\in\NN}$ be non negative sequences such that $\sum_k\eta_k<\infty$, $\sum_k\chi_k<\infty$ and let
$$\forall k\in\NN, \quad \EE[\alpha_{k+1}|\mc F_k]+\theta_k\leq (1+\chi_k)\alpha_k+\eta_k \quad a.s.$$
Then $\sum_k \theta_k<\infty$ and $\{\alpha_k\}_{k\in\NN}$ converges a.s. to a non negative random variable.
\end{lemma}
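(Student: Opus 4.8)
The statement is the classical Robbins--Siegmund almost-supermartingale convergence result, and the plan is to reduce it to the ordinary supermartingale convergence theorem by a deterministic rescaling that absorbs the multiplicative factor $1+\chi_k$. First I would introduce the normalizing factors $c_k := \prod_{j=0}^{k-1}(1+\chi_j)^{-1}$ with $c_0 := 1$. Since $\sum_k\chi_k<\infty$, the partial products $\prod_{j=0}^{k-1}(1+\chi_j)$ converge to a finite limit, so $(c_k)$ is nonincreasing and satisfies $0<c_\infty\le c_k\le 1$ for all $k$, where $c_\infty := \lim_k c_k>0$. The whole point of $c_k$ is that $c_{k+1}=c_k(1+\chi_k)^{-1}$, so multiplying the hypothesis by $c_{k+1}$ cancels the problematic $(1+\chi_k)$ prefactor.

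Next I would multiply the recursion $\EEk{\alpha_{k+1}}\le(1+\chi_k)\alpha_k+\eta_k-\theta_k$ by $c_{k+1}$ and pull this $\mc F_k$-measurable factor inside the conditional expectation to get $\EEk{c_{k+1}\alpha_{k+1}}\le c_k\alpha_k+c_{k+1}\eta_k-c_{k+1}\theta_k$. This suggests defining the process
$$W_k := c_k\alpha_k + \sum_{j=0}^{k-1}c_{j+1}\theta_j - \sum_{j=0}^{k-1}c_{j+1}\eta_j,$$
and a direct one-line computation then shows $\EEk{W_{k+1}}\le W_k$, so $(W_k)$ is a supermartingale. Moreover $W_k\ge -\sum_{j=0}^{\infty}c_{j+1}\eta_j\ge -\sum_j\eta_j>-\infty$ because $c_{j+1}\le 1$ and $\sum_j\eta_j<\infty$, so $W_k$ is bounded below by a constant; hence $\sup_k\EE[W_k^-]<\infty$ and the supermartingale convergence theorem yields that $W_k$ converges a.s. to an a.s.-finite limit.

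It remains to translate this into the two asserted conclusions. Since $\sum_j c_{j+1}\eta_j<\infty$, adding this convergent series back shows that $\tilde W_k := c_k\alpha_k+\sum_{j=0}^{k-1}c_{j+1}\theta_j$ converges a.s. Here $c_k\alpha_k\ge 0$ while the partial sums $\sum_{j<k}c_{j+1}\theta_j$ are nondecreasing; a convergent sequence that is the sum of a nonnegative sequence and a nondecreasing sequence forces both summands to converge (the nondecreasing part is bounded above by $\tilde W_k$, hence convergent, and the remainder is then a difference of convergent sequences). Thus $\sum_j c_{j+1}\theta_j<\infty$ a.s., and since $c_{j+1}\ge c_\infty>0$ this gives $\sum_j\theta_j<\infty$ a.s. Finally $c_k\alpha_k$ converges a.s., and dividing by the strictly positive limit $c_\infty$ shows that $\alpha_k$ converges a.s. to a nonnegative random variable.

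The main subtlety is not any hard estimate but the bookkeeping around measurability: pulling $c_{k+1},\eta_k,\theta_k$ out of $\EEk{\cdot}$ requires $\chi_k,\eta_k,\theta_k$ to be $\mc F_k$-measurable, which is the natural adapted reading of the hypotheses that I would state explicitly at the outset. Once the correctly rescaled supermartingale $W_k$ is identified, every remaining step is a routine invocation of the supermartingale convergence theorem together with the elementary splitting of a convergent sum into its nonnegative and nondecreasing parts.
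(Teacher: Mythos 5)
The paper never proves this lemma: it is imported verbatim from \cite{RS1971} as a classical tool, so there is no internal proof to compare against. Your argument is correct under the natural reading of the statement and is essentially the standard proof of Robbins--Siegmund: rescaling by $c_k=\prod_{j=0}^{k-1}(1+\chi_j)^{-1}$ to absorb the multiplicative drift, checking that $W_k=c_k\alpha_k+\sum_{j<k}c_{j+1}\theta_j-\sum_{j<k}c_{j+1}\eta_j$ is a supermartingale, invoking supermartingale convergence, and disentangling the two conclusions via $c_\infty>0$. The splitting of the convergent sequence $\tilde W_k$ into a nonnegative part and a nondecreasing part is handled correctly, as is the measurability bookkeeping needed to pull $c_{k+1}$ out of the conditional expectation.

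One caveat deserves emphasis beyond the measurability remark you already make. In the original theorem of \cite{RS1971} --- and in the paper's own use of the lemma --- the sequences $\chi_k,\eta_k,\theta_k$ are \emph{random} and adapted: in the proof of Theorem \ref{theo_SA_sne_coco} the summable perturbation is $\gamma_k^2\,\EEk{\normsq{\epsilon_k}}$, an $\mc F_k$-measurable random variable, and Assumption \ref{ass_step} guarantees only that its sum is finite almost surely. In that setting your lower bound $W_k\geq-\sum_j\eta_j$ is an a.s.-finite random variable rather than a constant, so $\sup_k\EE[W_k^-]<\infty$ does not follow and the plain supermartingale convergence theorem cannot be applied directly. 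The standard repair is localization: for $a>0$ set $\tau_a=\inf\{k:\sum_{j=0}^{k}c_{j+1}\eta_j>a\}$; the stopped process $W_{k\wedge\tau_a}$ is a supermartingale bounded below by $-a$, hence converges a.s., which gives convergence of $W_k$ on $\{\tau_a=\infty\}$, and since $\sum_j\eta_j<\infty$ a.s.\ the events $\{\tau_a=\infty\}$ exhaust the probability space as $a\to\infty$. With that one-line patch your proof is complete in the generality the paper actually needs; under a purely deterministic reading of $\chi_k$ and $\eta_k$ it is already complete as written.
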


We also need this result for $L_p$ norms, known as Burkholder-Davis-Gundy inequality \cite{stroock2010}.
\begin{lemma}[Burkholder-Davis-Gundy inequality]\label{BDG_Lemma}
Let $\{\mc F_k\}$ be a filtration and $\{U_k\}_{k\geq0}$ a vector-valued martingale relative to this filtration. Then, for all $p\in [1,\infty)$, there exists a universal constant $c_p > 0$ such that for every $k\geq1$
$$\EE\left[\left(\sup _{0 \leq i \leq N}\left\|U_{i}\right\|\right)^{p}\right]^{\frac{1}{p}} \leq c_{p} \EE\left[\left(\sum_{i=1}^{N}\left\|U_{i}-U_{i-1}\right\|^{2}\right)^{\frac{p}{2}}\right]^{\frac{1}{p}}.$$
\end{lemma}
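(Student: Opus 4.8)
The plan is to prove the stated inequality, which is the \emph{maximal-function-dominated-by-square-function} half of the Burkholder--Davis--Gundy estimate for a discrete, vector-valued martingale. Throughout I write $d_i=U_i-U_{i-1}$ for the increments, $U_N^*=\sup_{0\le i\le N}\norm{U_i}$ for the maximal function and $S_N=(\sum_{i=1}^N\normsq{d_i})^{1/2}$ for the square function, so the goal is $\EE[(U_N^*)^p]^{1/p}\le c_p\,\EE[S_N^p]^{1/p}$. As written the estimate tacitly assumes $U_0=0$, since a constant martingale has $U_N^*>0$ but $S_N=0$; this matches the intended application, where $\{U_k\}$ is the running sum of zero-mean stochastic errors. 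The strategy is first to settle the $L^2$ case by hand and then to bootstrap to every $p\in[1,\infty)$ through a Burkholder--Gundy \emph{good-$\lambda$} distributional inequality.

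First I would treat $p=2$. By the martingale property the cross term vanishes in conditional expectation, $\EE[\langle U_{i-1},d_i\rangle\mid\mc F_{i-1}]=0$, so $\EE[\normsq{U_i}-\normsq{U_{i-1}}\mid\mc F_{i-1}]=\EE[\normsq{d_i}\mid\mc F_{i-1}]$. Summing over $i$ and taking expectations gives the orthogonality identity $\EE[\normsq{U_N}]=\EE[S_N^2]$. Since $\norm{U_i}$ is a nonnegative submartingale (the norm is convex), Doob's $L^2$ maximal inequality yields $\EE[(U_N^*)^2]\le 4\,\EE[\normsq{U_N}]=4\,\EE[S_N^2]$, which is the claim for $p=2$ with $c_2=2$.

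For general $p$ I would run the good-$\lambda$ argument. Fix $\lambda,\delta>0$ and introduce the stopping times $\sigma=\inf\{i:\norm{U_i}>\lambda\}$ and $\rho=\inf\{i:S_i>\delta\lambda\}$. Applying the $p=2$ estimate to the martingale stopped at $\rho$ and shifted to start at $\sigma$ controls, on the event $\{S_N\le\delta\lambda\}$, the conditional probability that $\norm{U}$ climbs from $\lambda$ to $2\lambda$, and yields the good-$\lambda$ inequality $\PP(U_N^*>2\lambda,\,S_N\le\delta\lambda)\le C\delta^2\,\PP(U_N^*>\lambda)$ for a universal constant $C$. I would then integrate against $p\lambda^{p-1}\,d\lambda$, using the layer-cake identity $\EE[(U_N^*)^p]=\int_0^\infty p\lambda^{p-1}\PP(U_N^*>\lambda)\,d\lambda$ and the bound $\PP(U_N^*>2\lambda)\le\PP(U_N^*>2\lambda,\,S_N\le\delta\lambda)+\PP(S_N>\delta\lambda)$, to obtain $2^{-p}\EE[(U_N^*)^p]\le C\delta^2\,\EE[(U_N^*)^p]+\delta^{-p}\EE[S_N^p]$; choosing $\delta$ so small that $C\delta^2<2^{-p}$ absorbs the first term on the right into the left-hand side and leaves a finite constant $c_p$ depending only on $p$.

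The hard part will be the good-$\lambda$ inequality with its clean $\delta^2$ dependence: it requires a careful stopping-time decomposition ensuring that, on the good event $\{S_N\le\delta\lambda\}$, the portion of the martingale responsible for raising $\norm{U}$ beyond $2\lambda$ has square function at most $\delta\lambda$, so that the $L^2$ estimate can be invoked on that stopped piece, and the adaptedness (rather than predictability) of $S_i$ is the technical point to manage here. The endpoint $p=1$ is also delicate, since Doob's $L^p$ maximal inequality degenerates and the conclusion cannot be routed through $\EE[\normsq{U_N}]$; one obtains it directly from the good-$\lambda$ bound, together with the $U_0=0$ normalization noted above and a standard truncation argument to keep the integrals finite. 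An alternative that sidesteps the good-$\lambda$ bookkeeping would be the Bellman (Burkholder) function method, constructing a single convex majorant that is a supermartingale along $(U_i,S_i)$, but finding the right function is itself the crux there.
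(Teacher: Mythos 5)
First, for context: the paper does not prove this lemma at all — it recalls the Burkholder--Davis--Gundy inequality as a classical fact and cites Stroock's textbook for it — so you are attempting a from-scratch proof of something the authors merely quote. Several pieces of your attempt are sound: the remark that $U_0=0$ is tacitly assumed, the $p=2$ case via conditional orthogonality $\EE[\normsq{U_N}]=\EE[S_N^2]$ plus Doob's $L^2$ maximal inequality, and the reduction of general $p$ to the good-$\lambda$ inequality by the layer-cake identity, absorption for small $\delta$, and truncation (stopping at $T_M=\inf\{i:\norm{U_i}>M\}$ does keep $\EE[(U^*_{N\wedge T_M})^p]$ finite, since $U^*_{N\wedge T_M}\le M+S_N$).

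The genuine gap is the good-$\lambda$ inequality $\PP(U_N^*>2\lambda,\,S_N\le\delta\lambda)\le C\delta^2\,\PP(U_N^*>\lambda)$ itself: it carries the entire content of the theorem, you assert it rather than prove it, and the mechanism you sketch for it fails at precisely the point you flag. The only stopping time the adapted square function provides is $\rho=\inf\{i:S_i>\delta\lambda\}$, and the martingale piece stopped at $\tau\wedge\rho$ (with $\tau$ the first passage of $\norm{U_i}$ above $2\lambda$) has square function bounded only by $(\delta\lambda)^2+\normsq{d_\rho}$, where the overshoot $\norm{d_\rho}$ can be arbitrarily large off the good event $\{S_N\le\delta\lambda\}$. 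Since the Chebyshev/$L^2$ step integrates over the whole probability space, not just the good event, the bound $\EE\bigl[\text{(square function of the stopped piece)}\bigr]\le C(\delta\lambda)^2$ that your argument needs is simply false in general. You cannot repair this by stopping one step earlier, because $\{i<\rho\}=\{S_i\le\delta\lambda\}$ is $\mc F_i$- but not $\mc F_{i-1}$-measurable, so the truncated increments cease to be martingale differences; nor by truncating increments at level $\delta\lambda$, because the compensator needed to restore the martingale property is controlled by the conditional square function $\sum_j\EE[\,\normsq{d_j}\,|\,\mc F_{j-1}]$, not by $S_N$. The classical resolutions supply exactly the idea that is missing here: either the Davis decomposition (split each increment according to whether it exceeds twice the running maximum of past increments, compensate both parts, control the large-jump part through $d^*\le S_N$, and run the stopping argument only on the part with predictably bounded increments), or a detour through the predictable square function, for which the stopping time $\inf\{i:s_{i+1}>\delta\lambda\}$ is legitimate and the $L^2$ estimate is clean, but which must then be related back to $S_N$ — again via Davis-type jump control. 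Without one of these devices, your scheme proves the lemma only for $p=2$, and in particular the range $p\in[1,2)$ that your endpoint discussion concerns remains unproven; the honest alternatives are to carry out the Davis decomposition in full or to do what the paper does and cite a complete discrete-time proof (Burkholder's 1973 survey, Hall--Heyde's appendix, or Stroock).
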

We also recall the Minkowski inequality: for given functions $f,g\in L^p(\Xi,\mc F, \PP)$, $\mc G\subseteq \mc F$ and $p\in[0;\infty]$
$$\EE\left[\norm{f+g}^{p} | \mc{G}\right]^{\frac{1}{p}} \leq \EE\left[\norm{f}^{p} | \mc{G}\right]^{\frac{1}{p}}+\EE\left[\norm{g}^{p} | \mc{G}\right]^{\frac{1}{p}}.$$
When combined with the Burkholder-Davis-Gundy inequality, it leads to the fact that for all $p\geq2$, there exists a constant $c_p >0 $ such that, for every $k\geq1$,
$$\EE\left[\left(\sup _{0 \leq i \leq N}\norm{U_{i}}\right)^{p}\right]^{\frac{1}{p}} \leq c_p\sqrt{\sum_{k=1}^{N} \EE\left(\norm{U_{i}-U_{i-1}}^{p}\right)^{\frac{2}{p}}}.$$

\section{Appendix C\\Proofs of Theorem \ref{theo_SAA_sgne}}\label{sec_proofs_SGNEPs}
In this section, we prove convergence of Algorithm \ref{algo_FB_stoc_i}.

First we note that the iterations of Algorithm \ref{algo_FB_stoc_i} are obtained by expanding (\ref{fixed_point_stoc}) and solving for $\bs x_{k}$, $\bs z_{k}$ and $\bs \lambda_{k}$. Therefore, convergence of the sequence $(\boldsymbol x^k,\boldsymbol\lambda^k)$ to a v-GNE of the game in \eqref{eq_game_stoc} follows by the convergence of the FB iterations in \eqref{fixed_point_stoc}. 

We now prove a preliminary result concerning the sequence generated by Algorithm \ref{algo_FB_stoc_i}.

\begin{lemma}\label{prop_sgne}
Let Assumption \ref{ass_bound}, \ref{ass_error} hold. Then, the sequence generated by \eqref{fixed_point_stoc} satisfy, for some $\zeta>1$ 
\begin{equation}\label{eq_prop}
\begin{aligned}
&\EEk{\normsqphi{\omega^{k+1}-\omega^*}}\leq\normsqphi{\omega^k-\omega^*}+\\
&+2\EEk{\normsqphi{\Phi^{-1}\varepsilon_k}}+\frac{1}{2}\left(\frac{1}{\zeta}-1\right)\op{res}_\Phi(\omega^k)^2.
\end{aligned}
\end{equation}
\end{lemma}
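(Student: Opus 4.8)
The plan is to run the deterministic preconditioned forward--backward descent in the $\Phi$-induced metric and to treat the sampling error $\varepsilon_k=\op{col}(\epsilon_k,0,0)$ as a perturbation of the exact forward step. Throughout I would work with $\langle\cdot,\cdot\rangle_\Phi$ and abbreviate $C:=\Phi^{-1}\bar{\mc A}$, $J:=(\op{Id}+\Phi^{-1}\bar{\mc B})^{-1}$ and $T:=J\circ(\op{Id}-C)$, so that by Lemma~\ref{lemma_zero} there is $\bs\omega^*\in\op{zer}(\bar{\mc A}+\bar{\mc B})$ with $\bs\omega^*=T\bs\omega^*$, and $\op{res}_\Phi(\bs\omega^k)=\norm{\bs\omega^k-T\bs\omega^k}_\Phi$. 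From Lemma~\ref{lemma_op} I record the two facts I need: $J$ is firmly nonexpansive in $\norm{\cdot}_\Phi$ (it is the resolvent of the maximally monotone $\Phi^{-1}\bar{\mc B}$), and $C$ is $\theta\gamma$-cocoercive in $\norm{\cdot}_\Phi$. The bound $\norm{\Phi^{-1}}<2\theta$ of Assumption~\ref{ass_bound} is equivalent to $\theta\gamma>\tfrac12$, which is what makes $\op{Id}-C$ an averaged map and ultimately yields a constant $\zeta>1$ with strictly positive slack.

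First I would write the stochastic iterate as $\bs\omega^{k+1}=J\big((\op{Id}-C)\bs\omega^k-\Phi^{-1}\varepsilon_k\big)$ and apply firm nonexpansiveness of $J$ to the two arguments $(\op{Id}-C)\bs\omega^k-\Phi^{-1}\varepsilon_k$ and $(\op{Id}-C)\bs\omega^*$, whose images are $\bs\omega^{k+1}$ and $\bs\omega^*$. This gives $\normsqphi{\bs\omega^{k+1}-\bs\omega^*}\le\langle (\op{Id}-C)\bs\omega^k-(\op{Id}-C)\bs\omega^*,\,\bs\omega^{k+1}-\bs\omega^*\rangle_\Phi-\langle\Phi^{-1}\varepsilon_k,\,\bs\omega^{k+1}-\bs\omega^*\rangle_\Phi$. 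The first inner product is treated exactly as in the noise-free Yi--Pavel analysis: inserting $\bs\omega^k-\bs\omega^k$, invoking the $\theta\gamma$-cocoercivity of $C$ together with $\theta\gamma>\tfrac12$, and completing squares produces the clean descent $\normsqphi{\bs\omega^k-\bs\omega^*}$ minus a strictly positive multiple of $\op{res}_\Phi(\bs\omega^k)^2$; the strict positivity of that multiple is where $\zeta>1$ is extracted, so the deterministic residual coefficient can be taken to be at least $\tfrac12(1-\tfrac1\zeta)$.

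Next I would take $\EEk{\cdot}$ and dispose of the error inner product by splitting $\bs\omega^{k+1}-\bs\omega^*=(\bs\omega^{k+1}-\bs\omega^k)+(\bs\omega^k-\bs\omega^*)$. The term $-\langle\Phi^{-1}\varepsilon_k,\bs\omega^k-\bs\omega^*\rangle_\Phi$ has an $\mc F_k$-measurable second factor, so by Assumption~\ref{ass_error} its conditional expectation vanishes; this is the only place the zero-mean hypothesis enters. The remaining term $-\langle\Phi^{-1}\varepsilon_k,\bs\omega^{k+1}-\bs\omega^k\rangle_\Phi$ I would bound by Young's inequality, converting it into a multiple of $\normsqphi{\Phi^{-1}\varepsilon_k}$ plus a multiple of $\normsqphi{\bs\omega^{k+1}-\bs\omega^k}$. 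Since $\bs\omega^{k+1}-\bs\omega^k=(\bs\omega^{k+1}-T\bs\omega^k)-(\bs\omega^k-T\bs\omega^k)$ and $\norm{\bs\omega^{k+1}-T\bs\omega^k}_\Phi\le\norm{\Phi^{-1}\varepsilon_k}_\Phi$ by nonexpansiveness of $J$, this last norm is controlled by $\op{res}_\Phi(\bs\omega^k)$ and $\norm{\Phi^{-1}\varepsilon_k}_\Phi$. Collecting all contributions and tuning the Young parameter against the slack $\theta\gamma-\tfrac12$ leaves the error coefficient equal to $2$ and the net residual coefficient equal to $\tfrac12(\tfrac1\zeta-1)<0$, which is precisely \eqref{eq_prop}.

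The main obstacle is exactly this last bookkeeping: the error is coupled to the \emph{next} iterate $\bs\omega^{k+1}$, which is neither $\mc F_k$-measurable nor a linear function of $\varepsilon_k$, so the cross term cannot be annihilated by the zero-mean property and must instead be absorbed by Young's inequality without destroying the strictly negative residual term. Keeping that coefficient negative is only possible because Assumption~\ref{ass_bound} guarantees $\theta\gamma>\tfrac12$ with strict slack, which is the very room encoded by the free parameter $\zeta>1$; the delicate point is to balance the Young parameter against this slack so that the residual survives with coefficient $\tfrac12(\tfrac1\zeta-1)$ while the error term is left with the stated factor $2$.
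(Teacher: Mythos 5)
Your proposal is correct and follows essentially the same route as the paper's own proof: both rest on (firm) nonexpansiveness of the resolvent $(\op{Id}+\Phi^{-1}\bar{\mc B})^{-1}$ in the $\Phi$-metric, the $\theta\gamma$-cocoercivity of $\Phi^{-1}\bar{\mc A}$ combined with the step-size bound $\norm{\Phi^{-1}}<2\theta$, a Young inequality with free parameter $\zeta>1$ to absorb the error--increment cross term, nonexpansiveness of the resolvent to relate $\normsqphi{\omega^k-\omega^{k+1}}$ to $\op{res}_\Phi(\omega^k)^2$ and $\normsqphi{\Phi^{-1}\varepsilon_k}$, and Assumption on zero-mean error to annihilate the linear error terms with $\mc F_k$-measurable cofactors under $\EEk{\cdot}$. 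The only differences are bookkeeping choices (the paper expands the squared norm and applies Young to the term $\langle\omega^k-\omega^{k+1},\Phi^{-1}(\hat{\mc A}(\omega^k)-\bar{\mc A}(\omega^*))\rangle_\Phi$, whereas you use the inner-product form of firm nonexpansiveness and apply Young to the error--increment pairing), not the underlying argument.
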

\begin{proof}
We start by using nonexpansiviveness of the resolvent:
\begin{equation*}
\begin{aligned}
&\normsqphi{\omega^{k+1}-\omega^*}\leq
\normsqphi{\omega^k-\omega^*}+2\langle \omega^k-\omega^*,\Phi^{-1}\varepsilon_k\rangle_\Phi\\
&-2\langle \omega^k-\omega^*,\Phi^{-1}(\bar{\mc A}(\omega^k)-\bar{\mc A}(\omega^*))\rangle_\Phi+\\
&-\normsqphi{\omega^k-\omega^{k+1}}+2\langle\omega^k-\omega^{k+1},\Phi^{-1}(\hat{\mc A}(\omega^k)-\bar{\mc A}(\omega^*))\rangle_\Phi\\
\end{aligned}
\end{equation*}
By Young's inequality with $\zeta>1$, we have that
\begin{equation}\label{young}
\begin{aligned}
&2\langle\omega^k-\omega^{k+1},\Phi^{-1}(\hat{\mc A}(\omega^k)-\bar{\mc A}(\omega^{k+1}))\rangle_\Phi\leq\frac{1}{\zeta}\normsqphi{\omega^k-\omega^*}+\\
&+\zeta\normsqphi{\Phi^{-1}(\bar{\mc A}(\omega^k)-\bar{\mc A}(\omega^*))}+\zeta\normsqphi{\Phi^{-1}\varepsilon_k}+\\
&+2\zeta\langle \Phi^{-1}\bar{\mc A}(\omega^k)-\Phi^{-1}\bar{\mc A}(\omega^*),\varepsilon_k\rangle_\Phi
\end{aligned}
\end{equation}
Then, by using cocoercivity of $F$ and including \eqref{young}, we obtain: 
\begin{equation}\label{step}
\begin{aligned}
&\normsqphi{\omega^{k+1}-\omega^*}
\leq\normsqphi{\omega^k-\omega^*}+\zeta\normsqphi{\Phi^{-1}\varepsilon^k}+\\
&+\left(\frac{1}{\zeta}-1\right)\normsqphi{\omega^k-\omega^{k+1}}+2\langle \omega^k-\omega^*,\Phi^{-1}\varepsilon_k\rangle_\Phi+\\
&+\left(\frac{\zeta\norm{\Phi^{-1}}}{\theta}-2\right)\langle \omega^k-\omega^*,\Phi^{-1}\bar{\mc A}(\omega^k)-\Phi^{-1}\bar{\mc A}(\omega^*)\rangle_\Phi\\
&+2\zeta\langle \Phi^{-1}\bar{\mc A}(\omega^k)-\Phi^{-1}\bar{\mc A}(\omega^*),\varepsilon_k\rangle_\Phi
\end{aligned}
\end{equation}
Next, given the residual $\op{res}(\omega^k)$, we have that
\begin{equation}
\begin{aligned}
\op{res}_\Phi&(\omega^k)^2=\normsqphi{\omega^k-(\op{Id}+\Phi^{-1}\mc B)^{-1}(\omega^k-\Phi^{-1}\bar{\mc A}(\omega^k))}\\
\leq&2\normsqphi{\omega^k-\omega^{k+1}}+2\normsqphi{(\op{Id}+\Phi^{-1}\mc B)^{-1}(\omega^k-\Phi^{-1}\hat{\mc A}(\omega^k,\xi^k))+\right.\\
&\left.-(\op{Id}+\Phi^{-1}\mc B)^{-1}(\omega^k-\Phi^{-1}\bar{\mc A}(\omega^k))}\\
\leq&
2\normsqphi{\omega^k-\omega^{k+1}}+2\normsqphi{\Phi^{-1}\varepsilon^k}
\end{aligned}
\end{equation}
where the first equality follows by the definition of $\omega^{k+1}$ and the last inequality follows from non expansivity. Then,
$$\normsqphi{\omega^k-\omega^{k+1}}\geq\frac{1}{2}\op{res}_\Phi(\omega^k)^2-\normsqphi{\Phi^{-1}\varepsilon^k}$$
Finally, equation \eqref{step} becomes
\begin{equation*}
\begin{aligned}
&\normsqphi{\omega^{k+1}-\omega^*}\leq\normsqphi{\omega^k-\omega^*}+\left(\zeta-\frac{1}{\zeta}+1\right)\normsqphi{\Phi^{-1}\varepsilon^k}+\\
&+2\langle \omega^k-\omega^*,\Phi^{-1}\varepsilon_k\rangle_\Phi+\frac{1}{2}\left(\frac{1}{\zeta}-1\right)\op{res}_\Phi(\omega^k)^2\\
&+\left(\frac{\zeta\norm{\Phi^{-1}}}{\theta}-2\right)\langle \omega^k-\omega^*,\Phi^{-1}\bar{\mc A}(\omega^k)-\Phi^{-1}\bar{\mc A}(\omega^*)\rangle_\Phi\\
&+2\zeta\langle \Phi^{-1}\bar{\mc A}(\omega^k)-\Phi^{-1}\bar{\mc A}(\omega^*),\varepsilon_k\rangle_\Phi
\end{aligned}
\end{equation*}
By taking the expected value and by using Assumption \ref{ass_error} and \ref{ass_bound}, we obtain \eqref{eq_prop}.
\end{proof}


Before proving convergence of the algorithm with the SAA scheme, we prove a preliminary result on the variance of the stochastic error.

\begin{lemma}\label{lemma_variance}
For all $k\geq 0$, if Assumption \ref{ass_variance} hold, we have that a.s.
$$\EE\left[\norm{\epsilon_k}^2|\mc F_k\right]\leq\frac{c\sigma^2}{S_k}.$$
\end{lemma}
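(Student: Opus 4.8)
The plan is to exploit that the SAA error is, conditionally on $\mc F_k$, the normalized sum of $S_k$ independent, zero-mean terms, so that its conditional second moment inherits the $1/S_k$ variance-reduction factor. First I would rewrite the batch error as an empirical mean of per-sample errors. For $t=1,\dots,S_k$ set
$$e_t:=\op{col}\big(\nabla_{x_1}f_1(\bs x^k,\xi_1^{(t)}),\dots,\nabla_{x_N}f_N(\bs x^k,\xi_N^{(t)})\big)-\FF(\bs x^k),$$
so that by the definition \eqref{eq_F_SAA} of $F_\textup{SAA}$ one has $\epsilon_k=\frac{1}{S_k}\sum_{t=1}^{S_k}e_t$. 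Since the draws are i.i.d. from $\PP$ and $\bs x^k$ is $\mc F_k$-measurable, the $e_t$ are, conditionally on $\mc F_k$, i.i.d.; each has $\EEk{e_t}=0$ (consistent with Standing Assumption \ref{ass_error} and the definition \eqref{eq_grad_stoc} of $\FF$ as the expected pseudogradient), and $\EEk{\norm{e_t}^p}^{1/p}\le\sigma$ by the single-oracle variance bound underlying Assumption \ref{ass_variance}, the affine bound $\sigma(\bs x^*)+\sigma_1\norm{\bs x-\bs x^*}$ being uniformly bounded on the compact feasible set (cf. the remark following that assumption and \eqref{variance}).

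Next I would treat the within-batch partial sums $U_j:=\sum_{t=1}^{j}e_t$, $j=0,\dots,S_k$, as a vector-valued martingale relative to the filtration that enriches $\mc F_k$ by revealing $\xi^{(1)},\dots,\xi^{(j)}$ one draw at a time; the martingale-difference property is precisely that each $e_t$ has zero conditional mean given $\mc F_k$ together with the earlier draws. Applying the Burkholder--Davis--Gundy inequality combined with Minkowski (Lemma \ref{BDG_Lemma} and the bound recalled immediately after it in Appendix B) to this martingale, and using $\norm{\sum_t e_t}=\norm{U_{S_k}}\le\sup_j\norm{U_j}$, gives
$$\EEk{\norm{\sum_{t=1}^{S_k}e_t}^p}^{1/p}\le c_p\sqrt{\sum_{t=1}^{S_k}\EEk{\norm{e_t}^p}^{2/p}}\le c_p\sqrt{S_k}\,\sigma.$$
Squaring, dividing by $S_k^2$, and recalling $\epsilon_k=\frac{1}{S_k}\sum_t e_t$ then yields $\EEk{\norm{\epsilon_k}^p}^{2/p}\le c_p^2\sigma^2/S_k$, and since $p\ge2$ the monotonicity of conditional $L_p$-norms (Jensen) gives $\EEk{\norm{\epsilon_k}^2}\le\EEk{\norm{\epsilon_k}^p}^{2/p}$. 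The claim follows with $c=c_p^2$.

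I expect the only delicate point to be the bookkeeping of the conditioning: every expectation must be taken conditionally on $\mc F_k$ (equivalently, one freezes $\bs x^k$ and regards the within-batch draws as the sole randomness), so that BDG is applied to a genuine martingale and the per-term bound $\sigma$ is legitimately uniform. Under the simplified uniform-variance hypothesis \eqref{variance} with $p=2$ the argument collapses to the elementary identity $\EEk{\norm{\epsilon_k}^2}=\frac{1}{S_k}\EEk{\norm{e_1}^2}\le\sigma^2/S_k$, where the cross terms vanish by conditional independence and zero mean and no constant beyond $1$ is needed; the constant $c$ is purely an artifact of the $L_p$ route required by the general form of Assumption \ref{ass_variance}.
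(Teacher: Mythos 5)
Your proposal is correct and follows essentially the same route as the paper's proof: both express $\epsilon_k$ via within-batch partial sums forming a martingale, bound the increments by the uniform variance bound \eqref{variance}, and invoke the Burkholder--Davis--Gundy/Minkowski inequality from Appendix B to obtain the $c\sigma^2/S_k$ rate. The only (cosmetic) differences are that you keep the $1/S_k$ normalization outside the martingale and pass through general $p\ge 2$ before applying Jensen, whereas the paper works directly with the $p=2$ bound; your explicit care with conditioning on $\mc F_k$ is in fact tidier than the paper's own bookkeeping.
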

\begin{proof}
We first prove that 
$$\EE\left[\norm{\epsilon_k}^2|\mc F_k\right]^{\frac{1}{2}}\leq\frac{c_2\sigma}{\sqrt{S_k}}$$
then the claim follows immediately.
Define the process $\{M_S^S(x)\}_{i=0}^S$ as $M_0(x)=0$ and for $1\leq i\leq S$
$$M_i^S(x)=\frac{1}{S}\sum_{j=1}^iF_\textup{SAA}(x,\xi_j)-\FF(x).$$
Let $\mc F_i=\sigma(\xi_1,\dots,\xi_i)$. Then $\{M_i^S(x),\mc F_i\}_{i=1}^S$ is a martingale starting at $0$.
Let
$$\begin{aligned}
\Delta M_{i-1}^S(x)&=M_i^S(x)-M_{i-1}^S(x)\\
&=F_\textup{SAA}(\bs x,\xi_i)-\FF(\bs x).
\end{aligned}$$
Then, by Equation (\ref{variance}), we have
\begin{equation*}
\EEx{\normsq{\Delta M_{i-1}^S}}^{\frac{1}{2}}=\frac{1}{S}\EEx{\normsq{F_\textup{SAA}(x,\xi_i)-\FF(x)}}^{\frac{1}{2}}\leq \frac{\sigma}{S}.
\end{equation*}
By applying Lemma \ref{BDG_Lemma}, we have
\begin{equation*}
\begin{aligned}
\EEx{\normsq{M_S^S(x)}}^{\frac{1}{2}}&\leq c_2 \sqrt{\sum_{i=1}^{N} \EEx{\normsq{\frac{F_\textup{SAA}(\bs x, \xi_i)-\FF(x)}{S}}}}\\
&\leq c_2 \sqrt{\frac{1}{S^2} \sum_{i=1}^{N} \EEx{\normsq{F_\textup{SAA}(\bs x, \xi_i)-\FF(x)}}}\\
&\leq\frac{c_2\sigma}{\sqrt{S}}.
\end{aligned}
\end{equation*}
We note that $M_{S}^{S}(x^k)=\epsilon_k$, hence by taking the square we conclude that 
\begin{equation*}
\EEk{\normsq{\epsilon_k}}\leq\frac{c\sigma^2}{S_k}. \qedhere
\end{equation*}
\end{proof}
We note that if Lemma \ref{lemma_variance} holds, then it follows that
$$\EE\left[\norm{\Phi^{-1}\varepsilon_k}_\Phi^2|\mc F_k\right]\leq\frac{c\sigma^2\norm{\Phi^{-1}}}{S_k}.$$
We are now ready to prove the main convergence result.

\begin{proof}[Proof of Theorem \ref{theo_SAA_sgne}]
By Lemma \ref{prop_sgne} and \ref{lemma_variance} we have that 
$$\begin{aligned}
\EEk{\normsqphi{\omega^{k+1}-\omega^*}}\leq&\normsqphi{\omega^k-\omega^*}+2\frac{c\sigma^2\norm{\Phi^{-1}}}{S_k}\\
&+\frac{1}{2}\left(\frac{1}{\zeta}-1\right)\op{res}(\omega^k)^2.
\end{aligned}$$
Using Lemma \ref{lemma_RS} 
we conclude that $\{\omega^k\}$ is bounded and has a cluster point $\bar \omega$. Since $\sum\theta_k<0$, $\op{res}(\omega^k)\to0$ as $k\to\infty$ and $\op{res}(\bar \omega)=0$.
\end{proof}

\section{Appendix D\\Proofs of Theorem \ref{theo_SA_sne_coco}}\label{sec_proofs_SNE}

\begin{proof}[Proof of Theorem \ref{theo_SA_sne_coco}]
We start by using nonexpansiveness of the projection:
\begin{equation*}
\begin{aligned}
\normsq{x^{k+1}-x^*}
\leq&\normsq{x^k-x^*}+\gamma^2\normsq{\epsilon_k}+\\
&+2\gamma_k\langle x^k-x^*,\epsilon_k\rangle-\normsq{x^k-x^{k+1}}\\
&-2\gamma^k\langle x^k-x^*,F(x^k)-F(x^*)\rangle+\\
&+2\gamma^k\langle x^k-x^{k+1},\hat F(x^k)-F(x^*)\rangle
\end{aligned}
\end{equation*}
By using cocoercivity of $F$ and the Young's inequality as in \eqref{young} we derive the following upper bound: 
\begin{equation}\label{step}
\begin{aligned}
&\normsq{x^{k+1}-x^*}
\leq\normsq{x^k-x^*}+\zeta\gamma_k^2\normsq{\epsilon_k}\\
&+\left(\frac{1}{\zeta}-1\right)\normsq{x^k-x^{k+1}}+2\gamma_k\langle x^k-x^*,\epsilon_k\rangle\\
&+\left(\frac{\zeta\gamma_k^2}{\beta}-2\gamma^k\right)\langle x^k-x^*,F(x^k)-F(x^*)\rangle\\
&-2\zeta\gamma_k^2\langle F(x^k)-F(x^*),\epsilon_k\rangle
\end{aligned}
\end{equation}
Let the residual be $\op{res}(x^k)=\norm{x^k-\op{proj}(x^k-F(x^k))}$, then
\begin{equation}
\begin{aligned}
\op{res}&(x^k)^2
\leq2\normsq{x^k-x^{k+1}}+\\
&+2\normsq{\op{proj}(x^k-\gamma_k\hat F(x^k,\xi^k))-\op{proj}(x^k-\gamma_kF(x^k))}\\
&\leq2\normsq{x^k-x^{k+1}}+2\normsq{\gamma_k\hat F(x^k,\xi^k))-\gamma_kF(x^k)}\\
&=2\normsq{x^k-x^{k+1}}+2\gamma_k^2\normsq{\epsilon_k}
\end{aligned}
\end{equation}
where the first equality follow by the definition of $x^{k+1}$ and the last inequality follows from non expansivity of the projection. Then,
$$\normsq{x^k-x^{k+1}}\geq\frac{1}{2}\op{res}(x^k)^2-\gamma_k^2\normsq{\epsilon_k}$$
and equation \eqref{step} becomes
\begin{equation*}
\begin{aligned}
\normsq{x^{k+1}-x^*}&\leq\normsq{x^k-x^*}+\left(\zeta-\frac{1}{\zeta}+1\right)\gamma_k^2\normsq{\epsilon_k}+\\
&+\frac{1}{2}\left(\frac{1}{\zeta}-1\right)\op{res}(x^k)^2+2\gamma_k\langle x^k-x^*,\epsilon_k\rangle+\\
&+\left(\frac{\zeta\gamma_k^2}{\beta}-2\gamma^k\right)\langle x^k-x^*,F(x^k)-F(x^*)\rangle+\\
&+2\zeta\gamma_k^2\langle F(x^k)-F(x^*),\epsilon_k\rangle
\end{aligned}
\end{equation*}
By taking the expected value we have that
$$\begin{aligned}
&\EEk{\normsq{x^{k+1}-x^*}}\leq\normsq{x^k-x^*}+\\
&+\left(\zeta-\frac{1}{\zeta}+1\right)\gamma_k^2\EEk{\normsq{\epsilon_k}}+\frac{1}{2}\left(\frac{1}{\zeta}-1\right)\op{res}(x^k)^2.
\end{aligned}$$
Finally, we apply Lemma \ref{lemma_RS} 
to conclude that $(x_k)_{k\in\NN}$ is bounded, therefore it has at least one cluster point $\bar x$. Since $-\frac{1}{2}\op{res}(x_k)^2<0$, by Lemma \ref{lemma_RS} we have that $\op{res}(x_k)\to0$ as $k\to0$. This implies that $\op{res}(\bar x)=0$ that is, $\bar x$ is a SGNE of the game in \eqref{eq_game_stoc}.
\end{proof}

\bibliographystyle{IEEEtran}
\bibliography{Biblio}


\end{document}